\newtheorem{theorem}{Theorem}[section]
\newtheorem{proposition}[theorem]{Proposition}
\newtheorem{corollary}[theorem]{Corollary}
\newtheorem{lemma}[theorem]{Lemma}
\theoremstyle{definition}
\newtheorem{definition}[theorem]{Definition}
\newtheorem{remark}[theorem]{Remark}
\title{Invariance of the restricted $p$-power map on integrable derivations under stable equivalences}
\author{Lleonard Rubio y Degrassi}
\date{\today}
\begin{document}
\maketitle

\begin{abstract}
We show that the $p$-power maps in the first Hochschild cohomology space of finite-dimensional selfinjective algebras over a field of prime characteristic $p$ commute with stable equivalences of Morita type on the subgroup of classes represented by integrable derivations. We show, by giving an example, that the $p$-power maps do not necessarily commute with arbitrary transfer maps in the Hochschild cohomology of symmetric algebras.
\end{abstract}

\section{Introduction}

Let  $k$ be a field of prime characteristic $p$.
For symmetric $k$-algebras, it is shown  in $\cite{KLZ}$ that the Gerstenhaber bracket in Hochschild cohomology commutes with the transfer maps introduced in $\cite{Markus1}$. Zimmermann proved in $\cite{Zim}$ that the $p$-power map on (the positive part of) Hochschild cohomology commutes with derived equivalences. We show in this paper that the $p$-power map, restricted to the classes of integrable derivations, commutes with stable equivalences of Morita type between finite-dimensional  selfinjective algebras. We also show, by giving an example, that $p$-power maps need not commute with arbitrary  transfer maps in the Hochschild cohomology of symmetric algebras. 
To state our main result, we use the following notation: let $A$ be a finite-dimensional selfinjective $k$-algebra. For $r$ a positive integer, we denote by $\mathrm{Aut}_r(A[[t]])$ the subgroup of $k[[t]]$-algebra automorphism of $A[[t]]$ which induce the identity on $A[[t]]/t^rA[[t]]$. If $\alpha \in \mathrm{Aut}_r(A[[t]])$, then there is a unique $k[[t]]$-linear map $\mu$ on $A[[t]]$ such that $\alpha(a)= a+t^r\mu(a)$ for all $a \in A[[t]]$.\\
An easy verification (see Proposition $\ref{1.3}$) shows that the map $\bar{\mu}$ induced by $\mu$ on the quotient $A[[t]]/tA[[t]]\cong A$ is a derivation; any such derivation is called $\it{r}$-$\it {integrable}$. We denote by $\mathrm{HH}^1_r(A)$ the image in $\mathrm{HH}^1(A)$ of all $r$-integrable derivations. Let $A$, $B$ be finite-dimensional  selfinjective $k$-algebras,  $M$ be an $A$-$B$-bimodule and $N$ a $B$-$A$-bimodule. Following Brou\'e $\cite{B}$, we say that $M$ and $N$ induce a stable equivalence of Morita type between $A$ and $B$ if $M$, $N$ are finitely generated projective as left and right modules with the property that $M \otimes_B N \cong A \oplus X$ for some projective $A$-$A$-bimodule $X$ and $N \otimes_A M \cong B \oplus Y$ for some projective $B$-$B$-bimodule $Y$. If $A$, $B$ are symmetric then $N$ can by replaced by $M^{\vee}$.

\begin{theorem}
\label{mainthm}
Let $A, B$ be finite-dimensional selfinjective $k$-algebras with separable semisimple quotients, and let $M$, $N$ be an $A$-$B$-bimodule, $B$-$A$-bimodule, respectively, inducing a stable equivalence of Morita type between $A$ and $B$. For any positive integer $r$, the $p$-power map sends $\mathrm{HH}^1_r(A)$ to $\mathrm{HH}^1_{rp}(A)$, and we have a commutative diagram of maps 
 \begin{center}
 \begin{minipage}{0.3 \textwidth}
      \begin{tikzpicture}[node distance=3.5cm, auto]
  \node (HB1) {$\mathrm{HH}^1_{r}(A)$};
  \node (HB2) [below of=HB1][node distance=2 cm ] {$\mathrm{HH}^1_{rp}(A)$};
  \node (HA1) [right of=HB1] {$\mathrm{HH}^1_{r}(B)$};
  \node (HA2) [right of=HB2] {$\mathrm{HH}^1_{rp}(B)$};
  \draw[->] (HB1) to node {$\cong$} (HA1);
  \draw[->] (HA1) to node {$[p]$} (HA2);
 \draw[->] (HB1) to node   {$[p]$} (HB2);
\draw[->] (HB2) to node  {$\cong$} (HA2);
\end{tikzpicture}
\end{minipage}
\end{center}

where the horizontal isomorphisms are induced by the  functor $N\otimes_A\ -\ \otimes_A M$, and where the vertical maps are the $p$-power maps.
\end{theorem}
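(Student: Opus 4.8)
The plan is to separate the two assertions. For the first, I would realize the restricted power map at the level of automorphisms of $A[[t]]$. Given $\alpha\in\mathrm{Aut}_r(A[[t]])$ with associated $k[[t]]$-linear operator $\mu$, so that $\alpha=\mathrm{id}+t^r\mu$, I would simply expand the $p$-th iterate: since $\mathrm{id}$ and $\mu$ commute as operators and $t^r$ is central, $\alpha^p=\sum_{j=0}^p\binom{p}{j}t^{rj}\mu^j$, and in characteristic $p$ all the intermediate binomial coefficients vanish, leaving $\alpha^p=\mathrm{id}+t^{rp}\mu^p$. Hence $\alpha^p\in\mathrm{Aut}_{rp}(A[[t]])$ with associated operator $\mu^p$. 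Because reduction modulo $t$ is a ring homomorphism $\mathrm{End}_{k[[t]]}(A[[t]])\to\mathrm{End}_k(A)$, the derivation induced by $\alpha^p$ is $\overline{\mu^p}=\bar\mu^{\,p}$, which is exactly the restricted power $\bar\mu^{[p]}$ of the $r$-integrable derivation $\bar\mu$. As the restricted power is well defined modulo inner derivations, this shows $[p]$ carries $\mathrm{HH}^1_r(A)$ into $\mathrm{HH}^1_{rp}(A)$, and likewise for $B$; this produces the two vertical maps.

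For the commutative diagram I would lift the stable equivalence to the power series rings. The bimodules $M[[t]]=M\otimes_k k[[t]]$ and $N[[t]]$ induce a stable equivalence of Morita type between the $k[[t]]$-algebras $A[[t]]$ and $B[[t]]$, since $M[[t]]\otimes_{B[[t]]}N[[t]]\cong A[[t]]\oplus X[[t]]$ and $N[[t]]\otimes_{A[[t]]}M[[t]]\cong B[[t]]\oplus Y[[t]]$ with $X[[t]],Y[[t]]$ projective. In the stable category of bimodules the projective summands vanish, so the functor $\Phi=N[[t]]\otimes_{A[[t]]}-\otimes_{A[[t]]}M[[t]]$ becomes monoidal and sends invertible objects to invertible objects. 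The idea is then to track the invertible twisted bimodule ${}_1A[[t]]_\alpha$ attached to $\alpha$: its class records the derivation $\bar\mu$, and twisting is multiplicative, ${}_1A[[t]]_\alpha\otimes_{A[[t]]}{}_1A[[t]]_{\alpha'}\cong{}_1A[[t]]_{\alpha\alpha'}$, so in particular the $p$-th tensor power of ${}_1A[[t]]_\alpha$ is ${}_1A[[t]]_{\alpha^p}$.

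Next I would transfer this twist. Applying $\Phi$ to ${}_1A[[t]]_\alpha$ produces an invertible object of the stable bimodule category of $B[[t]]$, and here the hypothesis that the semisimple quotients are separable enters: it should allow one to realize this stable class by an honest twisted bimodule ${}_1B[[t]]_\beta$ for some $\beta\in\mathrm{Aut}(B[[t]])$, and to conclude from $\alpha\equiv\mathrm{id}\pmod{t^r}$ that $\beta\in\mathrm{Aut}_r(B[[t]])$, with induced derivation $\bar\nu$ equal to the transfer $T(\bar\mu)$ of $\bar\mu$ under the functor. This simultaneously shows that $T$ maps $\mathrm{HH}^1_r(A)$ into $\mathrm{HH}^1_r(B)$; applying the symmetric argument with the reverse functor $M[[t]]\otimes_{B[[t]]}-\otimes_{B[[t]]}N[[t]]$ gives an inverse, so $T$ restricts to the horizontal isomorphisms $\mathrm{HH}^1_r(A)\cong\mathrm{HH}^1_r(B)$ and $\mathrm{HH}^1_{rp}(A)\cong\mathrm{HH}^1_{rp}(B)$. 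Finally, commutativity follows by combining monoidality with multiplicativity of twisting: $\Phi({}_1A[[t]]_{\alpha^p})\cong\Phi({}_1A[[t]]_\alpha)^{\otimes p}\cong{}_1B[[t]]_\beta^{\otimes p}\cong{}_1B[[t]]_{\beta^p}$ in the stable category, so the transfer of the class of $\alpha^p$ is the class of $\beta^p$. By the first paragraph the left vertical map sends $\bar\mu$ to $\overline{\mu^p}$ (represented by $\alpha^p$) and the right vertical map sends $\bar\nu=T(\bar\mu)$ to $\overline{\nu^p}$ (represented by $\beta^p$), and these are matched by $T$.

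I expect the main obstacle to be precisely the transfer of automorphisms in the third step: passing from the stable isomorphism class of $\Phi({}_1A[[t]]_\alpha)$ to an honest automorphism $\beta$ of $B[[t]]$ lying in $\mathrm{Aut}_r$, with the correct induced derivation. This is where selfinjectivity and the separability of the semisimple quotient are needed, in order to lift invertible stable bimodules to genuine twisted bimodules and to control the order of vanishing in $t$. It is also the place where the contrast with arbitrary transfer maps lives: without such a lift the $p$-power structure, which is finer than the Lie bracket, need not be preserved, as the example for symmetric algebras shows.
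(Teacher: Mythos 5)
Your opening step and your closing step are both correct, and the opening is in fact cleaner than the paper's own treatment: the paper obtains $\alpha^p\in\mathrm{Aut}_{rp}(A[[t]])$ with associated derivation $D_r^p$ by a combinatorial analysis of the coefficients of $\bigl(\sum_{i}D_it^i\bigr)^p$ (Proposition \ref{propbin} and Corollary \ref{P1}), whereas your one-line binomial expansion $(\mathrm{id}+t^r\mu)^p=\mathrm{id}+t^{rp}\mu^p$ of commuting operators in characteristic $p$ gives the same conclusion directly, and reduction modulo $t$ then identifies the induced derivation with $\bar\mu^{\,p}$. Likewise your final reduction of the commutativity of the square to multiplicativity of twisting --- from ${}_{\alpha^{-1}}M[[t]]\cong M[[t]]_{\beta}$ deduce ${}_{\alpha^{-p}}M[[t]]\cong M[[t]]_{\beta^{p}}$ --- is exactly how the paper concludes its proof.

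The genuine gap is the step you yourself flag as ``the main obstacle'': the existence, for each $\alpha\in\mathrm{Aut}_r(A[[t]])$, of an honest $\beta\in\mathrm{Aut}_r(B[[t]])$ with ${}_{\alpha^{-1}}M[[t]]\cong M[[t]]_{\beta}$, together with the identification of the induced derivation class $\bar\nu$ with the image of $\bar\mu$ under the map on $\mathrm{HH}^1$ induced by the functor $N\otimes_A-\otimes_A M$. Writing that separability ``should allow one to realize this stable class by an honest twisted bimodule'' is naming the statement to be proved, not proving it; this is precisely the content of Theorem \ref{thm1} in the paper, whose proof rests on \cite[4.2]{Markus} and \cite[5.1]{Markus} for the existence of $\beta$ and the group isomorphism $\mathrm{Out}_r(A[[t]])\cong\mathrm{Out}_r(B[[t]])$. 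Moreover, even granting the existence of $\beta$, your plan still needs the second half of Theorem \ref{thm1}: that the map $\bar\mu\mapsto\bar\nu$ coincides with the isomorphism $\mathrm{HH}^1_r(A)\cong\mathrm{HH}^1_r(B)$ induced by $N\otimes_A-\otimes_A M$, since that is the map appearing in the statement of the theorem; without this identification, commutativity of your square proves a statement about a different pair of horizontal maps. The paper establishes this identification by interpreting $r$-integrable derivations through connecting homomorphisms of explicit short exact sequences (Proposition \ref{con}) and comparing the two sequences obtained by tensoring with $\hat M$ via \cite[4.3]{Markus}. This identification cannot be expected to follow formally from monoidality alone: as the paper's Section 6 example shows, maps on $\mathrm{HH}^1$ arising from bimodule transfers do not in general respect the $p$-power structure, so the specific bimodule-theoretic construction of $\beta$ and the connecting-homomorphism comparison are where the real work of the theorem lies.
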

 In Section 2, we recall some basic results. In Section 3 we prove the main results concerning $r$-integrable derivation that allow us to prove in Section 4 the Theorem $\ref{mainthm}$. In the last section we  provide an example of when the $p$-power map  does not commute with a transfer map between the Hochschild cohomology of two symmetric algebras.

\section{Background}
 Let $A$ be a finite-dimensional algebra over $k$. For any integer $n\geq 0$ and any $A\otimes_k A^{op}$-module $M$ the Hochschild cohomology
 of degree $n$ of $A$ with coefficients in $M$  is denoted by
 $\mathrm{HH}^n(A;M)$ in particular $\mathrm{HH}^n(A)=\mathrm{HH}^n(A;A)$. It is well known
 that $\mathrm{HH}^0(A)=Z(A)$ and $\mathrm{HH^1}(A)$ is the space of derivations modulo inner
 derivations. The direct sum $\bigoplus_{n\geq 0} \mathrm{HH}^n(A)$ is a  Gerstenhaber algebra, in particular $\mathrm{HH}^1(A)$ is a Lie algebra. In addition, if the characteristic of $k$ is positive, there is a map $[p]: \mathrm{HH}^1(A) \to \mathrm{HH}^1(A)$, called $p$-power map. This map is induced by the map sending a derivation $f$ to $f^p$ that is $f$ composed $p$-times with itself. Then $\mathrm{HH}^1(A)$ endowed with the $p$-power becomes a restricted Lie algebra. 
   Let $A[[t]]$ be the formal power series with coefficients in $A$. By $\cite[2.1]{Markus}$  the canonical map $A[[t]] \to A[[t]]/ t^rA[[t]]$ induces an isomorphism 
   \begin{equation}
   \label{1eq}
   \mathrm{HH}^n(A[[t]];A[[t]]/t^rA[[t]])\cong \mathrm{HH}^n(A[[t]]/t^rA[[t]]).   
   \end{equation}
   for all $n \geq 0$ and $r > 0$.
The following is well known:
\begin{lemma}
\label{rmk1}
Let $A$ be a finite-dimensional algebra over $k$ and let $A[[t]]$ be the formal power series with coefficients in $A$. Then the multiplication in $A[[t]]$ induce a $k[[t]]$-algebra isomorphism $k[[t]] \otimes_k A \cong A[[t]]$.
\end{lemma}
\begin{proof} 
The isomorphism sends $\sum_{i\geq 0} \lambda_i t^i \otimes a$ to $\sum_{i \geq 0} \lambda_i a t^i$ where $\lambda_i \in k$ and $a \in A$. In order to show that this is an isomorphism, we construct its inverse as follows: let $\sum_{i\geq 0}a_it^i \in A[[t]]$ and let $\{e_j\}_{1 \leq j \leq n}$ be a $k$-basis of $A$. Write $a_i=\sum_{j=1}^n\mu_{ij}e_j$ for every non-negative integer $i$ where $\mu_{ij} \in k$. The inverse map sends $\sum_{i \geq 0} a_i  t^i$ to  $ \sum_{j=1}^n\Big(\sum_{i\geq 0}\mu_{ij}t^i \otimes e_j\Big)$.
   \end{proof}

\begin{corollary}
\label{cor1}
Let $A$ be a finite-dimensional algebra over $k$ and let $r$ be a positive integer. Then the canonical map $Z(A[[t]]) \to Z(A[[t]]/t^rA[[t]])$ is surjective.
\end{corollary}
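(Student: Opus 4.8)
The plan is to compute both centres explicitly via the identification of Lemma \ref{rmk1} and to observe that the canonical map is simply truncation of power series, which is manifestly surjective once one knows that the coefficients of a central element are themselves central.

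First I would use Lemma \ref{rmk1} to write every element of $A[[t]]$ uniquely as a power series $\sum_{i\geq 0} a_i t^i$ with $a_i \in A$, and correspondingly identify $A[[t]]/t^rA[[t]]$ with the truncated series $\sum_{i=0}^{r-1} a_i t^i$. Under these identifications the canonical map is the truncation $\sum_{i\geq 0} a_i t^i \mapsto \sum_{i=0}^{r-1} a_i t^i$.

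The key step is to characterise the two centres. Since $t$ is central and $A$ generates $A[[t]]$ (respectively the quotient) as a $k[[t]]$-algebra (respectively $k[t]/t^r$-algebra), an element is central if and only if it commutes with every $a \in A$. Expanding the equality $\bigl(\sum a_i t^i\bigr)a = a\bigl(\sum a_i t^i\bigr)$ and comparing the coefficients of each $t^i$ shows that $\sum a_i t^i$ is central precisely when every $a_i$ lies in $Z(A)$; the same computation applies verbatim in the quotient. Hence $Z(A[[t]])$ consists of the series with coefficients in $Z(A)$, and $Z(A[[t]]/t^rA[[t]])$ of the truncated series with coefficients in $Z(A)$.

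With these descriptions surjectivity is immediate: given a central element $\sum_{i=0}^{r-1} z_i t^i$ of the quotient, each $z_i \in Z(A)$, so the series $\sum_{i=0}^{r-1} z_i t^i$ (with vanishing higher coefficients) lies in $Z(A[[t]])$ and maps to the given element under truncation. The only point requiring care, and the nearest thing to an obstacle, is the comparison-of-coefficients argument, which rests on the uniqueness of the power-series expansion guaranteed by Lemma \ref{rmk1}; everything else is formal.
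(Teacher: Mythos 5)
Your proof is correct and is essentially the argument the paper intends: the corollary is stated as a direct consequence of Lemma \ref{rmk1}, namely that the identification $A[[t]]\cong k[[t]]\otimes_k A$ (valid because $A$ is finite-dimensional) forces $Z(A[[t]])=Z(A)[[t]]$ and $Z(A[[t]]/t^rA[[t]])$ to consist of truncated series with coefficients in $Z(A)$, whence the canonical map is truncation and is surjective. Your coefficient-comparison characterisation of the two centres is exactly this computation made explicit.
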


Let $n$ be an integer. We recall that if 
  \begin{center}
 \begin{tikzpicture}[node distance=2 cm,auto]
  \node (P2) {$0$};
  \node (P1) [right of=P2] {$X$};
  \node (P0) [right of=P1] {$Y$};
  \node (A) [right of=P0] {$ Z$};
    \node (A0) [right of=A] {$0$};
 \draw[->] (P2) to node  {$$} (P1);
  \draw[->] (P1) to node {$\tau$} (P0);
 \draw[->] (P0) to node   {$\sigma$} (A);
\draw[->] (A) to node  {$$} (A0);
\end{tikzpicture}
\end{center}

 is a short exact sequence of cochain complexes with differentials $\delta, \epsilon, \zeta$ respectively, then this induces a long exact sequence
 
 \begin{center}
      \begin{tikzpicture}[node distance=2 cm,auto]
  \node (P2) {$\dots$};
  \node (P1) [right of=P2] {$\mathrm{H}^n(X)$};
  \node (P0) [right of=P1] {$\mathrm{H}^n(Y)$};
  \node (A) [right of=P0] {$ \mathrm{H}^n(Z)$};
    \node (A0) [right of=A] {$\mathrm{H}^{n+1}(X)$};
        \node (A1) [right of=A0] {$\dots$};
  \draw[->] (P2) to node  {$$} (P1);
  \draw[->] (P1) to node {$\mathrm{H}^n(\tau)$} (P0);
 \draw[->] (P0) to node   {$\mathrm{H}^n(\sigma)$} (A);
\draw[->] (A) to node  {$d^n$} (A0);
\draw[->] (A0) to node  {$$} (A1);
\end{tikzpicture}
\end{center}

  depending functorially on the short exact sequence, where $d^n$ is called the connecting homomorphism which is obtained in the following way: let $\bar{z}=z+\mathrm{Im}(\zeta^{n-1}) \in \mathrm{H}^n(Z)$
for some $z \in \mathrm{Ker}(\zeta^n)\subseteq Z^n$. Since $\sigma$ is surjective in each degree there is $y \in Y^n$ such that $\sigma^n(y)=z$. Then $\epsilon(y)\in Y^{n+1}$ satisfies
\begin{equation}
\sigma^{n+1}(\epsilon^n(y))=\zeta^n(\sigma^n(y))=\zeta^n(z)=0
\end{equation}
Hence $\epsilon(y) \in \mathrm{ker}(\sigma^{n+1})=\mathrm{Im}(\tau^{n+1})$. Thus the is an $x \in X^{n+1}$ such that $\tau^{n+1}(x)=\epsilon^n(y)$. It is easy to check that $x \in \mathrm{Ker}(\delta^{n+1})$ and the class $\bar{x}=x+\mathrm{Im}(\delta^n)\in \mathrm{H}^{n+1}(X)$ depends only in the class $\bar{z}$ of $z$ in $\mathrm{H}^n(Z)$. The connecting homomorphism sends $\bar{z}$ to $\bar{x}$.\\

 For the next two sections all the tensor products are over $k$ unless otherwise specified.

\section{Integrable derivations of degree $r$}
\begin{definition}(cf. $\cite[1.1]{Matsumura}$)
Let $A$ be a finite-dimensional $k$-algebra. A $\it{higher}$ $\it{derivation}$ $\underline{D}$ of $A$ is a sequence  $\underline{D}=(D_i)_{i\geq 0}$ of $k$-linear endomorphisms $D_i: A \to A$ such that $D_0=\mathrm{Id}$ and
 $D_n(ab)= \sum_{i+j=n}D_i(a)D_j(b)$ for all $n \geq 1$ and all $a, b \in A$. 
\end{definition}



 For a fixed positive integer $r$  we denote by $\mathrm{Aut_r}(A[[t]])$ the group of all $k[[t]]$-algebra automorphism of $A[[t]]$ which the induce the identity on $A[[t]] / t^{r}A[[t]]$. Clearly we have an inclusion $\mathrm{Aut}_r(A[[t]])\subseteq \mathrm{Aut}_1(A[[t]])$ for every $r\geq 1$.\\
 Following $\cite{Matsumura}$ any higher derivation $\underline{D}=(D_i)_{i\geq 0}$ of $A$ determines a unique automorphism $\alpha \in \mathrm{Aut}_r(A[[t]])$ satisfying $\alpha(a)=\sum_{i \geq 0} D_i(a)t^i$ for all $a\in A$ and vice versa. Note that any $k[[t]]$-ring endomorphism of $A[[t]]$ is determined by its restriction to $A$.
We denote by $\mathrm{Out_r}(A[[t]])$ the image of the canonical map $\varphi: \mathrm {Aut_r}(A[[t]])\to \mathrm{Out}(A[[t]])$  and by  $\mathrm{Der}(A)$  the set of derivations over $A$.

\begin{lemma}
\label{out}
Let $A$ be a finite-dimensional $k$-algebra. Let $r$ be a positive integer. Then $\mathrm{Out_r}(A[[t]])$ is the kernel of the canonical group homomorphism 
\begin{equation}
\psi:\mathrm{Out}(A[[t]]) \to \mathrm{Out}(A[[t]]/ (t^rA[[t]])).
\end{equation}
\end{lemma}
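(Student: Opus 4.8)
The plan is to analyze $\psi$ through the underlying map on automorphism groups and to reduce the whole statement to the question of lifting units along the reduction $A[[t]] \to A[[t]]/t^rA[[t]]$. First I would record that reduction modulo $t^r$ carries any $k[[t]]$-algebra automorphism $\alpha$ of $A[[t]]$ to a $k[[t]]$-algebra automorphism $\bar\alpha$ of $A[[t]]/t^rA[[t]]$, because $t^rA[[t]]$ is a two-sided ideal that is preserved by $\alpha$ (it is $k[[t]]$-linear and $t^r$ is central). This produces the group homomorphism $\mathrm{Aut}(A[[t]]) \to \mathrm{Aut}(A[[t]]/t^rA[[t]])$ which descends to $\psi$ on outer automorphism groups, since inner automorphisms reduce to inner automorphisms.

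The inclusion $\mathrm{Out}_r(A[[t]]) \subseteq \ker\psi$ is immediate from the definitions: if $\alpha \in \mathrm{Aut}_r(A[[t]])$, then by construction $\bar\alpha = \mathrm{Id}$, so $\psi(\varphi(\alpha))$ is the trivial class.

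For the reverse inclusion I would take a class $[\alpha] \in \ker\psi$, so that $\bar\alpha$ is an inner automorphism of $A[[t]]/t^rA[[t]]$, say conjugation $\iota_{\bar u}$ by a unit $\bar u$. The key step is then to lift $\bar u$ to a unit $u \in A[[t]]^{\times}$. Granting this, the $k[[t]]$-algebra automorphism $\beta := \iota_{u}^{-1}\circ\alpha$ (where $\iota_u$ denotes conjugation by $u$) satisfies $\bar\beta = \iota_{\bar u}^{-1}\circ\bar\alpha = \mathrm{Id}$, so $\beta \in \mathrm{Aut}_r(A[[t]])$; moreover $[\beta] = [\alpha]$ in $\mathrm{Out}(A[[t]])$ because $\iota_u$ is inner. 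Hence $[\alpha] = \varphi(\beta) \in \mathrm{Out}_r(A[[t]])$, giving $\ker\psi \subseteq \mathrm{Out}_r(A[[t]])$.

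The main obstacle, and really the only nontrivial point, is the surjectivity of the induced map $A[[t]]^{\times} \to (A[[t]]/t^rA[[t]])^{\times}$ on unit groups. I would handle it by first lifting $\bar u$ to an arbitrary element $u \in A[[t]]$, which is possible since the reduction is surjective as a ring map, and then observing that the constant term $u_0 \in A$ coincides with the image of $\bar u$ under further reduction modulo $t$; as $\bar u$ is a unit of $A[[t]]/t^rA[[t]]$, this image is a unit of $A$. Finally, an element of the power series ring $A[[t]]$ is invertible exactly when its constant term is invertible in $A$, by the usual geometric-series argument exploiting $t$-adic completeness, so $u$ is a unit of $A[[t]]$ lifting $\bar u$, as needed.
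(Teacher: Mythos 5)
Your proof is correct, and its overall skeleton is the same as the paper's: establish the easy inclusion $\mathrm{Out}_r(A[[t]])\subseteq\ker\psi$, then for a class in the kernel write its reduction modulo $t^r$ as conjugation by a unit $\bar u$ of $A[[t]]/t^rA[[t]]$, lift $\bar u$ to a unit $u\in A[[t]]^{\times}$, and replace $\alpha$ by its composition with conjugation by $u^{-1}$, which induces the identity on $A[[t]]/t^rA[[t]]$ while staying in the same outer class. Where you genuinely diverge is in the unit-lifting step, which is the only substantive point of the lemma. The paper lifts $\bar u$ to an arbitrary preimage $u$, deduces from $\overline{A[[t]]}=\overline{A[[t]]}\,\bar u$ that $A[[t]]=A[[t]]u+t^rA[[t]]$, and invokes Nakayama's Lemma (using that $t^rA[[t]]$ lies in the Jacobson radical) to get $A[[t]]=A[[t]]u$, hence that $u$ is invertible; strictly speaking this argument produces a left inverse of $u$, and upgrading it to a two-sided inverse requires one further remark (for instance the symmetric argument on the right, or that a surjective endomorphism of a Noetherian module is bijective). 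Your route sidesteps this: you observe that the constant term $u_0$ of $u$ is the image of the unit $\bar u$ under the ring surjection $A[[t]]/t^rA[[t]]\to A[[t]]/tA[[t]]\cong A$ (valid since $r\geq 1$), hence a unit of $A$, and then apply the geometric-series criterion that an element of $A[[t]]$ is invertible exactly when its constant term is invertible. This is more elementary and self-contained — no Nakayama, no radical or finiteness considerations — and it yields a two-sided inverse outright, even for noncommutative $A$; what the paper's version buys in exchange is brevity and a technique that generalizes to quotients by other ideals contained in the radical, where no $t$-adic constant-term criterion is available.
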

\begin{proof}
 Clearly $\mathrm{Out}_r(A[[t]])\subseteq \mathrm{Ker} (\psi)$. Let $\alpha$ be a representative of an element in the kernel of $\psi$. Then $\psi({\alpha})$ is given by conjugation with an invertible element $\bar{u}= u+t^{r}A[[t]]$ in $A[[t]]/t^{r}A[[t]]$ where $u \in  A[[t]]$. If we denote by $\overline{A[[t]]}=A[[t]]/t^{r}A[[t]]$, since $\bar{u}$ is invertible in $\overline{A[[t]]}$, we have $ \overline{A[[t]]}=\overline{A[[t]]} \bar{u}$.
  Then we can lift it to $A[[t]]=A[[t]]u+t^{r}A[[t]]$.
   By Nakayama's Lemma we have $A[[t]]=A[[t]]u$ hence $u$ is invertible. 
Consequently if we replace $\alpha$ by $\alpha$ composed with the conjugation given by $u^{-1}$ then the resulting automorphism is in the same class as $\alpha$ and it induces the identity on $A[[t]]/t^{r}A[[t]]$. 
\end{proof}

Slightly extending Matsumura $\cite{Matsumura}$ we have the following terminology:
\begin{definition}
Let $A$ be a finite-dimensional $k$-algebra and let $r$ be a positive integer. A derivation $D \in \mathrm{Der}(A)$ is called $\it{r}$-$\it{integrable}$ if there exists a higher derivation $\underline{D}=(D_i)_{i \geq 0}$ such that $D_0=\mathrm{Id}$, $D_i=0$ for $1\leq i\leq r-1$, and $D=D_r$. We  denote by $\mathrm{Der}_r(A)$ the set of $r$-integrable derivations of $A$ and by $\mathrm{Inn}_r(A)$ the set of $r$-integrable which are inner. 
\end{definition}

It is easy to check that these are abelian groups, using Proposition $\ref{cor1.2}$.

Note that for $r=1$ we have the usual notion of integrable derivation that is integrable derivations are $1$-integrable.
We recall from $\cite[1.5] {Matsumura}$:
 \begin{proposition}
\label{cor1.2}
The set of higher derivation is a group with the product defined on $(\ref{1.2})$. In particular if we let $\underline{D}=(D_i)_{i \geq 0}$ and $\underline{D'}=(D'_i)_{i \geq 0}$  be two higher derivations, then:
\begin{equation}
\label{1.2}
\underline{D}\ \circ \underline{D'}=\Big(\sum_{i=0}^n D_i\circ D'_{n-i}\Big )_{n \geq 0}\\
\end{equation}
\end{proposition}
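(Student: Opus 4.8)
The plan is to deduce the group axioms from the bijective correspondence, already recorded above, between higher derivations $\underline{D} = (D_i)_{i \geq 0}$ of $A$ and the $k[[t]]$-algebra automorphisms of $A[[t]]$ inducing the identity modulo $t$, that is, the elements $\alpha \in \mathrm{Aut}_1(A[[t]])$, given by $\alpha(a) = \sum_{i \geq 0} D_i(a) t^i$ for $a \in A$. Since $\mathrm{Aut}_1(A[[t]])$ is a group under composition, it suffices to check that this bijection carries composition of automorphisms to the product $(\ref{1.2})$; the group structure then transports across the bijection, and no axiom needs to be verified by hand on the level of sequences.

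First I would compute the composite. Let $\underline{D}$, $\underline{D'}$ correspond to $\alpha$, $\alpha'$. Every $k[[t]]$-algebra endomorphism of $A[[t]]$ is $k[[t]]$-linear, hence continuous for the $t$-adic topology, and is determined by its restriction to $A$. Applying $\alpha$ to $\alpha'(a) = \sum_{j \geq 0} D'_j(a) t^j$ and using $k[[t]]$-linearity together with continuity gives
\begin{equation*}
(\alpha \circ \alpha')(a) = \sum_{j \geq 0} t^j \, \alpha(D'_j(a)) = \sum_{j \geq 0} t^j \sum_{i \geq 0} D_i(D'_j(a)) \, t^i = \sum_{n \geq 0} \Big( \sum_{i=0}^n D_i \circ D'_{n-i} \Big)(a) \, t^n .
\end{equation*}
Thus $\alpha \circ \alpha'$ is the automorphism attached to the sequence $\big( \sum_{i=0}^n D_i \circ D'_{n-i} \big)_{n \geq 0}$, which is exactly the formula $(\ref{1.2})$. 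In particular this sequence is again a higher derivation, because $\alpha \circ \alpha'$ again lies in $\mathrm{Aut}_1(A[[t]])$; this yields closure without any direct verification of the higher Leibniz rule.

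With the multiplication identified, the remaining structure is immediate. Associativity of $(\ref{1.2})$ is associativity of composition of automorphisms; the identity element is the trivial higher derivation $(\mathrm{Id}, 0, 0, \dots)$, corresponding to the identity of $A[[t]]$; and if $\underline{D} \leftrightarrow \alpha$, then $\alpha^{-1}$ also reduces to the identity modulo $t$, so it lies in $\mathrm{Aut}_1(A[[t]])$ and corresponds to a two-sided inverse of $\underline{D}$ under $(\ref{1.2})$.

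I do not anticipate a genuine obstacle, precisely because the correspondence is available: the single point demanding care is the interchange of the two summations in the displayed computation, which is justified by the $t$-adic continuity of $\alpha$ (equivalently, for each fixed $n$ only the finitely many pairs with $i + j = n$ contribute to the coefficient of $t^n$). An alternative, entirely self-contained route would be to verify the higher Leibniz rule for $\big(\sum_{i=0}^n D_i \circ D'_{n-i}\big)_{n\geq 0}$ directly, but this only reproduces by a longer computation what the automorphism picture makes transparent.
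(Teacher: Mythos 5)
Your proof is correct, but note that the paper itself gives no proof of Proposition \ref{cor1.2}: it is quoted verbatim from \cite[1.5]{Matsumura}, so there is no internal argument to compare against. Your route --- transporting the group structure of $\mathrm{Aut}_1(A[[t]])$ across the recorded bijection $\underline{D} \leftrightarrow \alpha$ --- is sound: the interchange of summations is legitimate because a $k[[t]]$-linear map preserves $t^nA[[t]]$ for every $n$, hence is $t$-adically continuous, and the coefficient of $t^n$ in the composite involves only the finitely many pairs $(i,j)$ with $i+j=n$. It is worth observing that your key computation is essentially the one the paper \emph{does} carry out in the unlabeled proposition following Proposition \ref{cor1.2}, where it is shown that $\phi:\mathrm{Aut}_1(A[[t]]) \to (\mathrm{End}_k(A)[[t]])^{\times}$ is a group homomorphism; there the author avoids any appeal to continuity by expanding $D_i(a)=\sum_{j=1}^n\mu_{ij}(a)e_j$ in a finite $k$-basis $\{e_j\}$ of $A$, so that $\alpha'(a)$ becomes a finite $k[[t]]$-linear combination of elements of $A$ and plain $k[[t]]$-linearity suffices. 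Your continuity argument buys a little more generality (it nowhere uses finite-dimensionality of $A$), while the paper's basis argument is topology-free; both are valid. The one thing your argument leans on is the bijectivity of the correspondence itself --- in particular that the endomorphism attached to a higher derivation is automatically invertible and that every element of $\mathrm{Aut}_1(A[[t]])$ arises from a higher derivation; since the paper records exactly this (from Matsumura) immediately before the proposition, the reliance is legitimate, but a fully self-contained treatment would have to include a proof of that correspondence as well.
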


\begin{proposition}
\label{1.3}
Let $A$ be a finite-dimensional $k$-algebra. Let $r$ be a positive integer, let $\alpha\in \mathrm{Aut_r}(A[[t]])$ and let $\mu: A[[t]] \to A[[t]]$ be the unique $k[[t]]$-linear map such that $\alpha(a)= a + t^r \mu(a)$ for all $a \in A[[t]]$. Then the following hold:
\begin{enumerate}[label=$($\alph*$)$]
\item The map $\bar{\mu}: A \cong  A[[t]]/tA[[t]] \to A\cong A[[t]]/tA[[t]]$ induced by $\mu$ is a derivation.
 In addition if $\alpha$  is an inner automorphism, then  $\bar{\mu}=[\bar{d},-]$ for some $\bar{d} \in A$; that is $\bar{\mu}$ is a inner derivation. 
\item The class of $\bar{\mu} \in \mathrm{HH}^1(A)$ depends only on the class of $\alpha \in \mathrm{Out}(A[[t]])$.
\end{enumerate}
\end{proposition}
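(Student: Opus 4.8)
The plan is to make $\mu$ explicit and then read off the Leibniz rule by reducing modulo $t$. Since $\alpha$ induces the identity on $A[[t]]/t^rA[[t]]$, we have $\alpha(a)-a\in t^rA[[t]]$ for every $a$, and because $A[[t]]$ is $t$-torsion-free the element $\mu(a)=t^{-r}(\alpha(a)-a)$ is well defined and $k[[t]]$-linear; hence it suffices to verify the derivation property on constants $a,b\in A$. Expanding the multiplicativity relation $\alpha(ab)=\alpha(a)\alpha(b)$ gives $t^r\mu(ab)=t^r(\mu(a)b+a\mu(b))+t^{2r}\mu(a)\mu(b)$. Cancelling $t^r$ and then reducing modulo $t$, where the surviving $t^r$-term vanishes because $r\geq 1$, yields $\bar\mu(ab)=\bar\mu(a)b+a\bar\mu(b)$, so $\bar\mu$ is a derivation.

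For the inner case of part (a), I would write $\alpha$ as conjugation by a unit $u=\sum_{i\geq 0}u_it^i\in A[[t]]$, noting that invertibility of $u$ forces $u_0$ to be invertible in $A$. The hypothesis that $\alpha$ is the identity modulo $t^r$ translates into $[u,a]\in t^rA[[t]]$ for all $a\in A$, which by comparison of coefficients says $u_i\in Z(A)$ for $0\leq i\leq r-1$; in particular $u_0$, and hence $u_0^{-1}$, is central. From $t^r\mu(a)=[u,a]u^{-1}$ one extracts the constant term $\bar\mu(a)=[u_r,a]u_0^{-1}$, and using centrality of $u_0^{-1}$ this rewrites as $[u_ru_0^{-1},a]$. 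Thus $\bar\mu=[\bar d,-]$ with $\bar d=u_ru_0^{-1}\in A$, an inner derivation.

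For part (b) the key observation is that $\alpha\mapsto\bar\mu_\alpha$ is a group homomorphism from $(\mathrm{Aut}_r(A[[t]]),\circ)$ to $(\mathrm{Der}(A),+)$: a direct expansion of $(\alpha\circ\alpha')(a)$ using $k[[t]]$-linearity gives $\mu_{\alpha\circ\alpha'}=\mu_\alpha+\mu_{\alpha'}+t^r\,\mu_\alpha\circ\mu_{\alpha'}$, so $\overline{\mu_{\alpha\circ\alpha'}}=\bar\mu_\alpha+\bar\mu_{\alpha'}$ once the $t^r$-term dies modulo $t$. Now if $\alpha,\alpha'\in\mathrm{Aut}_r(A[[t]])$ have the same image in $\mathrm{Out}(A[[t]])$, set $\beta=\alpha^{-1}\alpha'$; then $\beta$ is inner, and since inverses and composites of automorphisms that are the identity modulo $t^r$ are again the identity modulo $t^r$, we have $\beta\in\mathrm{Aut}_r(A[[t]])$. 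The homomorphism property gives $\bar\mu_{\alpha'}=\bar\mu_\alpha+\bar\mu_\beta$, and by part (a) $\bar\mu_\beta$ is inner; hence $\bar\mu_\alpha$ and $\bar\mu_{\alpha'}$ define the same class in $\mathrm{HH}^1(A)$.

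I expect the main obstacle to be the inner case of part (a): correctly isolating the $t^r$-coefficient of $[u,a]u^{-1}$ and recognising that the centrality of $u_0,\dots,u_{r-1}$, forced by the identity-modulo-$t^r$ condition, is exactly what is needed to absorb $u_0^{-1}$ and present $\bar\mu$ as a single inner derivation $[\bar d,-]$. The remaining steps are formal manipulations in $A[[t]]$ together with the composition bookkeeping used in part (b).
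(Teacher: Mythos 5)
Your proof is correct, and two of its three pieces --- the Leibniz computation for $\bar\mu$ and the reduction of part (b) to the inner case via the additivity $\overline{\mu_{\alpha\circ\alpha'}}=\bar\mu_\alpha+\bar\mu_{\alpha'}$ --- follow the same lines as the paper (for the additivity the paper appeals to the group structure on higher derivations, Proposition~\ref{cor1.2}, where you expand the composite directly; the content is identical). The genuine difference is the inner case of (a). The paper normalizes the conjugating unit: since $\alpha$ is the identity modulo $t^r$, the image $\bar c$ is a central unit of $A[[t]]/t^rA[[t]]$; using the surjectivity of $Z(A[[t]])\to Z(A[[t]]/t^rA[[t]])$ (Corollary~\ref{cor1}) and a Nakayama argument, it lifts $\bar c$ to a central unit, replaces $c$ by a representative of the form $c=1+t^rd$, and derives $\bar\mu=[\bar d,-]$ from the identity $[d,a]=\mu(a)+t^r\mu(a)d$ (Equation~\ref{1.4}). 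You avoid the lifting altogether and argue coefficientwise: the identity-mod-$t^r$ condition forces $u_0,\dots,u_{r-1}\in Z(A)$, and extracting the $t^r$-coefficient of $[u,a]u^{-1}$ gives $\bar\mu=[\,\overline{u_ru_0^{-1}}\,,-]$, where centrality of $u_0$ absorbs the factor $u_0^{-1}$ into the bracket. Your version is more elementary and self-contained (no Corollary~\ref{cor1}, no Nakayama); the paper's normalization buys the explicit representative $c=1+t^rd$ and Equation~\ref{1.4}, which are reused verbatim in the proof of Proposition~\ref{1.4.1}(a), so its extra machinery pays off later even though your argument fully suffices for the present proposition.
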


\begin{proof}
 Let $a,b \in A[[t]]$, since $\alpha$ is an automorphism we have $\alpha(ab)=ab+t^r\mu(ab)$ is equal to $\alpha(a)\alpha(b)=ab+t^r\mu(a)+t^r\mu(b)+t^{2r}\mu(a)\mu(b)$ hence we obtain $\mu(ab)=a\mu(b)+\mu(a)b+t^r\mu(a)\mu(b)$. Reducing modulo $t^r$ we have
 \begin{equation}
 \mu(ab)=a \mu(b)+\mu(a)b
 \end{equation}
 hence $\bar{\mu}$ is a derivation on $A$.\\
  Now suppose that $\alpha$ is an inner automorphism induced by conjugation by an element $c\in (A[[t]])^{\times}$ that is $\alpha(a)=cac^{-1}$. Since $\alpha$ induces the identity on $A[[t]]/t^rA[[t]]$ then taking the projection of $\alpha$ in $A[[t]]/t^rA[[t]]$ we have $\bar{c}\bar{a}\bar{c}^{-1}=\bar{a}$, that is $\bar{c}\bar{a}=\bar{a}\bar{c}$ hence $\bar{c}\in Z(A[[t]]/t^rA[[t]])^{\times}$. Since the map $Z(A[[t]]) \to Z(A[[t]]/t^rA[[t]])$ is surjective then there is an element $z \in Z(A)^{\times}$ such that $\bar{z}=\bar{c}$ hence such that $cz^{-1} \in 1+ t^rA[[t]]$. So if we replace $c$ by $cz^{-1}$ we have $c=1+t^rd$ for some $d \in A[[t]]$. If we take an $a \in A[[t]]$ we have $cac^{-1}=\alpha(a)=a+t^r\mu(a)$ hence $ca=ac+t^r\mu(a)c$, that is $ [c,a]=t^r\mu(a)c$.
   Now if we replace $c$ by $1+t^rd$ and we divide by $t^r$ we obtain 
     \begin{equation}
  \label{1.4}
   [d,a]=\mu(a)+t^r\mu(a)d.
     \end{equation}
Consequently $[\bar{d},\bar{a}]=\bar{\mu}(\bar{a})$ hence the result.\\
  For the second part  we let $\alpha_1, \alpha_2$ be two representatives in $\mathrm{Out}(A[[t]])$ with induced derivations $\mu_1, \mu_2$. Since $\alpha_1\circ \alpha_2^{-1} \in \mathrm{Inn}(A[[t]])$ then using Proposition $\ref{cor1.2}$ and first part of the Proposition we have that $\mu_1-\mu_2$ $\in \mathrm{Inn}(A)$. Hence the result.
\end{proof}

An equivalent definition of $r$-integrable can be deduced from the following: let  $\alpha \in \mathrm{Aut}_r(A[[t]])$ and let $a=\sum_{i=0}^{\infty}a_it^i$. Then
$\alpha(a) = \sum_{i,n\geq 0}D_n(a_i) t^{i+n}= a+t^r\sum_{k\geq r}\sum_{\substack{n, i \geq 1, i+n=k}}^kD_n(a_{i})t^{r-i-n}$ since $D_i=0$ for $1\leq i\leq r-1$. 
Hence we can write $\alpha$ as $\alpha(a)=a+t^r \mu(a)$ where $\mu$ is an linear endomorphism of $A[[t]]$. From Proposition $\ref{1.3}$ the map $\bar{\mu}:A \to A$ induced by $\mu$ is a derivation over  $A$, in fact, $\bar{\mu}$ is exactly $D_r$.  Hence a derivation  $D$ on $A$ is $r$-integrable if there is an algebra automorphism  of $A[[t]]$, say $\alpha$, and a $k[[t]]$-linear endomorphism $\mu$ of $A[[t]]$ such that $\alpha (a)= a+t^r\mu(a)$ for all $a\in A[[t]]$ and such that $D$ is equal to the map $\bar{\mu}$ induced by $\mu$ on $A\cong A[[t]]/tA[[t]]$.


\begin{proposition}
Let $A$ be a finite-dimensional $k$-algebra and let $\alpha \in \mathrm{Aut}_1(A[[t]])$. Let $(D_i)_{i\geq0}$ be a higher derivation satisfying $\alpha(a)=\sum_{i \geq 0} D_i(a)t^i$ for $a \in A$. The map that  sends $\alpha$ to $\sum_{i \geq 0}D_it^i$ induces a group homomorphism $\phi:\mathrm{Aut}_1(A[[t]]) \to (\mathrm{End}_k(A)[[t]])^{\times}$.
\end{proposition}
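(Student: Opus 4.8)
The plan is to check, in turn, that $\phi$ is well defined, that it lands in the unit group, and that it is multiplicative, with the entire argument resting on the $k[[t]]$-linearity of the automorphisms together with the product formula of Proposition~\ref{cor1.2}. For well-definedness I would invoke the remark preceding the statement: a $k[[t]]$-ring endomorphism of $A[[t]]$ is determined by its restriction to $A$, so each $\alpha \in \mathrm{Aut}_1(A[[t]])$ corresponds to a \emph{unique} higher derivation $(D_i)_{i\geq 0}$ with $\alpha(a)=\sum_{i\geq 0}D_i(a)t^i$ for $a\in A$; the normalization $D_0=\mathrm{Id}$ follows from the fact that $\alpha$ induces the identity on $A[[t]]/tA[[t]]\cong A$. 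Hence $\alpha\mapsto \sum_{i\geq 0}D_it^i$ is unambiguous.

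Next I would verify that $\phi(\alpha)=\sum_{i\geq 0}D_it^i$ is a unit in $\mathrm{End}_k(A)[[t]]$, whose multiplication is the convolution product sending $(\sum_i f_it^i)(\sum_j g_jt^j)$ to $\sum_n\big(\sum_{i+j=n}f_i\circ g_j\big)t^n$. Over any unital (possibly noncommutative) ring, a formal power series is a two-sided unit precisely when its constant coefficient is; here that coefficient is $D_0=\mathrm{Id}$, which is invertible in $\mathrm{End}_k(A)$, so $\phi(\alpha)\in (\mathrm{End}_k(A)[[t]])^\times$ and $\phi$ indeed takes values in the unit group.

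The heart of the argument is multiplicativity. Letting $\alpha,\beta\in \mathrm{Aut}_1(A[[t]])$ correspond to $(D_i)$ and $(E_j)$, I would compute for $a\in A$ that $(\alpha\circ\beta)(a)=\alpha\big(\sum_j E_j(a)t^j\big)$, and then use the $k[[t]]$-linearity of $\alpha$ to pull the scalars $t^j$ outside, obtaining $\sum_j \alpha(E_j(a))\,t^j=\sum_{i,j}D_i(E_j(a))\,t^{i+j}=\sum_n\big(\sum_{i+j=n}(D_i\circ E_j)(a)\big)t^n$. This identifies the higher derivation attached to $\alpha\circ\beta$ with the product $(D_i)\circ(E_j)$ of Proposition~\ref{cor1.2}, so $\phi(\alpha\circ\beta)=\sum_n\big(\sum_{i+j=n}D_i\circ E_j\big)t^n=\phi(\alpha)\,\phi(\beta)$, exactly the convolution product above. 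Since the identity automorphism has higher derivation $D_0=\mathrm{Id}$, $D_i=0$ for $i\geq 1$, it maps to $1$, and $\phi$ is a group homomorphism.

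The one place demanding care—and the main obstacle—is bookkeeping the interaction between composition in $\mathrm{End}_k(A)$ and the formal parameter $t$: one must move $\alpha$ past the powers $t^j$ using $k[[t]]$-linearity and keep the order of composition consistent with the chosen ring structure on $\mathrm{End}_k(A)[[t]]$, so that the convolution product reproduces the higher-derivation product of Proposition~\ref{cor1.2} and not its opposite. Once the conventions are fixed this is a routine verification, and no deeper input beyond Proposition~\ref{cor1.2} and the uniqueness of higher derivations is needed.
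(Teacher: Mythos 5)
Your proof is correct, and its core---computing the composite of two automorphisms on elements of $A$ and matching the coefficient of each power of $t$ with the convolution product, then identifying this with the higher-derivation product of Proposition~\ref{cor1.2}---is the same computation as in the paper. The substantive difference is how the automorphism is pushed through the power series. You apply $\alpha$ term by term to the \emph{infinite} sum $\sum_{j\geq 0}E_j(a)t^j$ and cite $k[[t]]$-linearity; strictly speaking, linearity gives only finite additivity, so this step needs the $t$-adic continuity of $\alpha$. That continuity does follow from $k[[t]]$-linearity (for every $N$ the tail $\sum_{j\geq N}E_j(a)t^j$ lies in $t^N A[[t]]$, hence so does its image, so both sides agree modulo $t^N A[[t]]$ for all $N$), but it deserves a sentence. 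The paper sidesteps this point by exploiting finite-dimensionality of $A$: expanding $D_i(a)=\sum_{j=1}^n\mu_{ij}(a)e_j$ over a $k$-basis $\{e_j\}$ rewrites $\sum_i D_i(a)t^i$ as the \emph{finite} $k[[t]]$-linear combination $\sum_{j=1}^n\bigl(\sum_i\mu_{ij}(a)t^i\bigr)e_j$, so bare $k[[t]]$-linearity of the second automorphism suffices, and the coefficients are reassembled using $k$-linearity of the $E_l$. Your route is shorter and does not use finite-dimensionality; the paper's stays entirely within finite sums. On the other hand, you also check well-definedness (uniqueness of the higher derivation and $D_0=\mathrm{Id}$) and that $\phi(\alpha)$ is actually a unit via its constant coefficient, points the paper's proof leaves implicit, and you correctly flag the ordering convention needed so that $\phi$ is a homomorphism rather than an anti-homomorphism.
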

\begin{proof}  
Let  $\beta \in \mathrm{Aut}_1(A[[t]])$. For $l \geq 0$ let $E_l \in \mathrm{End}_k(A)$ such that $\beta(a)=\sum_{l \geq 0} E_l(a)t^l$. For all $a \in A$ let  $\{e_j\}_{1 \leq j \leq n}$ be a $k$-basis of $A$. For every $i \geq 0$ define  $\mu_{ij}: A \to k$ such that  $D_i(a)=\sum_{j=1}^{n}\mu_{ij}(a)e_j$ where $a\in A$.
 On one side we have:
\begin{equation}
\begin{split}
(\beta \circ \alpha) (a)&= \beta\Big(\sum_{i \geq 0}D_i(a)t^i\Big) = \sum_{j=1}^{n}\beta\Big(\sum_{i \geq 0}\mu_{ij}(a)t^i  e_j\Big)\\
&=\sum_{j=1}^{n}\sum_{i \geq 0}\mu_{ij}(a)t^i \beta( e_j) = \sum_{l \geq 0} \sum_{i \geq 0} \sum_{j=1}^n \mu_{ij}(a) E_l(e_j)t^{i+l} 
\end{split}
\end{equation}
where the third equation holds since $\beta$ is an automorphism over $k[[t]]$. If we fix a degree $m \in \mathbb{N}$ we have

\begin{equation}
\begin{split}
\sum_{\substack{l,i \\ i+l=m}} \sum_{j=1}^n \mu_{ij}(a) E_l(e_j)t^{i+l}&=\sum_{\substack{l,i \\ i+l=m}} E_l(\sum_{j=1}^n \mu_{ij}(a)e_j)t^{m} \\
&=\sum_{\substack{l,i \\ i+l=m}}  E_l(D_i(a))t^{m} 
\end{split}
\end{equation}
Hence  $\phi(\beta \circ \alpha)$ in degree $m$ is equal to $ \sum_{\substack{i,l\geq 0 \\ i+l=m}} E_l\circ D_i t^m$. This is clearly equal to the coefficient at $t^m$ of $\phi(\beta)\phi(\alpha)$.
 \end{proof}

\begin{definition}
Let $A$ be a finite-dimensional $k$-algebra. Let $r$ be a positive integer then by $\mathrm{HH}^1_r(A)$ we denote the quotient $\mathrm{Der}_r(A)/ \mathrm{Inn}_r(A)$. 
\end{definition}
Clearly $\mathrm{HH}^1_r(A)$ can be identified with a subgroup of $\mathrm{HH}^1(A)$.

\begin{proposition}
\label{1.4.1}
Let $A$ be a finite-dimensional algebra over $k$. Let $r$ be a positive integer and let $\alpha \in \mathrm{Aut_r}(A[[t]])$ . Let $\mu$ the unique $k[[t]]$-linear map on $A[[t]]$ such that $\alpha(a)=a + t^r \mu(a)$ for all $a \in A[[t]]$. We denote by $\bar{\mu}$ the derivation induced on $A$ by $\mu$. 
\begin{enumerate}[label=$($\alph*$)$]
\item The derivation $\bar{\mu}$ is inner if and only if $\alpha$ induces an inner automorphism in $A[[t]]/t^{r+1}A[[t]]$.
\item We have the following short exact sequence of groups:
\begin{center}
      \begin{tikzpicture}[node distance=3 cm]
  \node (P2) {$1$};
  \node (P1) [right of=P2] [node distance=2 cm]{$ \mathrm{Out_{r+1}}(A[[t]])  $};
  \node (P0) [right of=P1] {$  \mathrm{Out_{r}}(A[[t]]) $};
  \node (A) [right of=P0] [node distance=2.5 cm] {$ \mathrm{HH^1_r}(A)$};
    \node (A0) [right of=A][node distance=1.5 cm] {$1$};
 \draw[->] (P2) to node  {$$} (P1);
  \draw[->] (P1) to node {$$} (P0);
 \draw[->] (P0) to node   {$$} (A);
\draw[->] (A) to node  {$$} (A0);
\end{tikzpicture}
\end{center}

\end{enumerate}
\end{proposition}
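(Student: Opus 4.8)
The plan is to prove part (a) first, since (b) follows almost formally once (a) identifies the relevant kernel. Throughout I write $\bar{\ \cdot\ }$ for reduction modulo the appropriate power of $t$, and I use repeatedly that a $k[[t]]$-algebra endomorphism of $A[[t]]$, or of $A[[t]]/t^{r+1}A[[t]]$, is determined by its restriction to $A$. Note also that for $a \in A$ one has $\alpha(a) \equiv a + t^r\bar{\mu}(a) \pmod{t^{r+1}}$, since modulo $t^{r+1}$ only the constant term of $\mu(a)$, namely $\bar{\mu}(a)$, survives multiplication by $t^r$.

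For the forward implication of (a), suppose $\bar{\mu} = [\bar{d}, -]$ for some $\bar{d} \in A$. I would set $c = 1 + t^r\bar{d} \in A[[t]]$; its image modulo $t^{r+1}$ is a unit with inverse $1 - t^r\bar{d}$. A direct computation gives $c a c^{-1} \equiv a + t^r[\bar{d}, a] \pmod{t^{r+1}}$ for $a \in A$, the $t^{2r}$-term vanishing because $2r \geq r+1$. Since $\alpha(a) \equiv a + t^r\bar{\mu}(a) = a + t^r[\bar{d},a] \pmod{t^{r+1}}$, the automorphism $\alpha$ and conjugation by $c$ agree on $A$ modulo $t^{r+1}$, hence agree as $k[[t]]$-algebra maps on $A[[t]]/t^{r+1}A[[t]]$; thus $\alpha$ induces an inner automorphism there.

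For the reverse implication, which is the technical heart, suppose $\alpha$ induces conjugation by a unit $\bar{v} \in (A[[t]]/t^{r+1}A[[t]])^{\times}$. I would first lift $\bar{v}$ to a genuine unit $v \in (A[[t]])^{\times}$, which is possible because an element of $A[[t]]$ is a unit as soon as its reduction modulo $t$ is, the ideal $tA[[t]]$ lying in the radical. Since $\alpha \in \mathrm{Aut}_r(A[[t]])$ is the identity modulo $t^r$, conjugation by $\bar{v}$ is trivial modulo $t^r$, so $v$ is central modulo $t^r$. Using the surjectivity of $Z(A[[t]]) \to Z(A[[t]]/t^rA[[t]])$ from Corollary $\ref{cor1}$ — exactly as in the second part of Proposition $\ref{1.3}$ — I would correct $v$ by a central unit to reduce to the case $v = 1 + t^r d$ with $d \in A[[t]]$. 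Then $v a v^{-1} \equiv a + t^r[d, a] \pmod{t^{r+1}}$, and comparing with $\alpha(a) = a + t^r\mu(a)$ yields $[\bar{d}, \bar{a}] = \bar{\mu}(\bar{a})$, so $\bar{\mu}$ is inner. I expect this normalization of $v$ to the form $1 + t^r d$ (the lift-and-center step) to be the main obstacle; the remaining content of (a) is bookkeeping modulo $t^{r+1}$.

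For (b), the inclusion $\mathrm{Aut}_{r+1}(A[[t]]) \subseteq \mathrm{Aut}_r(A[[t]])$ induces the injection $\mathrm{Out}_{r+1}(A[[t]]) \hookrightarrow \mathrm{Out}_r(A[[t]])$. I would define $\Phi \colon \mathrm{Out}_r(A[[t]]) \to \mathrm{HH}^1_r(A)$ by sending the class of $\alpha$ to the class of $\bar{\mu} = D_r$; this is well defined by Proposition $\ref{1.3}$(b) and lands in $\mathrm{HH}^1_r(A)$ since $\bar{\mu}$ is $r$-integrable by construction. Writing $\alpha(a) = a + t^r\mu(a)$ and $\beta(a) = a + t^r\nu(a)$, the composition formula of Proposition $\ref{cor1.2}$ gives that the degree-$r$ component of $\underline{D}\circ\underline{D'}$ is $D_r + D'_r$, the cross terms vanishing as they sit in degree $2r > r$; hence $\Phi$ is a homomorphism into the abelian group $\mathrm{HH}^1_r(A)$. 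Surjectivity is immediate from the definition of $r$-integrability: any $r$-integrable derivation $D$ arises as $D_r$ of some higher derivation, hence as $\Phi$ of the corresponding $\alpha$. Finally, the class of $\alpha$ lies in $\mathrm{Out}_{r+1}(A[[t]])$ precisely when some representative is the identity modulo $t^{r+1}$, i.e. when $\alpha$ induces an inner automorphism modulo $t^{r+1}$; by part (a) this is equivalent to $\bar{\mu}$ being inner, i.e. to $\Phi(\alpha) = 0$. Therefore $\ker\Phi = \mathrm{Out}_{r+1}(A[[t]])$ and the sequence is exact.
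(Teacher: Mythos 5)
Your proof is correct and follows essentially the same route as the paper: part (a) via conjugation by a unit of the form $1+t^r d$, with the reverse direction resting on the Nakayama-type unit lifting of Lemma~\ref{out} and the central correction from Corollary~\ref{cor1}, and part (b) via the homomorphism $\mathrm{Out}_r(A[[t]])\to \mathrm{HH}^1_r(A)$, $\alpha\mapsto \bar{\mu}$, whose additivity comes from Proposition~\ref{cor1.2}. If anything your write-up is more complete than the paper's: the paper only verifies the homomorphism property in (b), leaving surjectivity and the kernel identification implicit, and in the reverse direction of (a) it normalizes $\alpha$ itself into $\mathrm{Aut}_{r+1}(A[[t]])$, whereas you normalize the conjugating unit to $1+t^r d$ and read off the inner derivation directly.
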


\begin{proof}
Let assume  that $\bar{\mu}$ is inner derivation so $\bar{\mu}=[\bar{d},-]$ for some $d \in A[[t]]$. We can take $c=1+t^rd$ as in the proof of Proposition $\ref{1.3}$. Then from Equation $\ref{1.4}$ we can choose $\tau(a)=-\mu(a)d$ so we have $[d,a]=\mu(a)-t^r\tau(a)$ and since $c=1+t^rd$ then
\begin{equation}
[c,a]=[1+t^rd,a]=t^r[d,a]
\end{equation} 
So $[c,a]=t^r[d,a]=t^r\mu(a)-t^{2r}\tau(a)$. Hence $t^r\mu(a)=[c,a]+t^{2r}\tau(a)$. Consequently  $cac^{-1}=a+t^r\mu(a)c^{-1}-t^{2r}\tau(a)c^{-1}$. Using the fact that $\alpha(a)= a+t^{r}\mu(a)$ it follows that $\alpha(a)-cac^{-1}=t^{r}\mu(a)(1-c^{-1})+t^{2r}\tau(a)c^{-1}$. Since $c$ belongs to $1+t^{r}A[[t]]$, we have $c^{-1} \in 1+t^r A[[t]]$ hence $1-c^{-1} \in t^{r}A[[t]]$. This shows that $\alpha(a)-cac^{-1} \in t^{2r}A[[t]]\subset t^{r+1}A[[t]]$. Consequently $\alpha$ induces an inner automorphism on $A[[t]]/t^{r+1}A[[t]]$.\\
 Conversely, suppose that $\alpha$ acts as an inner automorphism on $A[[t]]/t^{r+1}A[[t]]$. Using the same argument as in Lemma $\ref{out}$ we may assume that $\alpha$ acts as identity on $A[[t]]/t^{r+1}A[[t]]$ hence it  induces an inner derivation on $A[[t]]/t^{r+1}A[[t]]$. Hence we can assume $\alpha$ such that $\alpha \in \mathrm{Aut}_{r+1}(A[[t]])$ . Hence $\alpha(a)=a+t^{r+1}\mu'(a)$ for some $\mu'(a)\in A[[t]]$, which gives the equality $\mu(a)=t\mu'(a)$. Consequently we have that $\mu$  induces the zero map on $A$.\\
For the second part let $\beta\in \mathrm{Aut_r}(A)$ such that $\beta(a)= a+t^r\nu(a)$ for all $a\in A[[t]]$ and for some linear morphism $\nu$ on $A[[t]]$. From Proposition $\ref{cor1.2}$ and Proposition $\ref{1.3}$ we have that  the class determined by $\beta\circ\alpha$ in $\mathrm{HH^1}(A)$ is the class determined by $\bar{\mu}+\bar{\nu}$. 
\end{proof}

A way to understand the action of the $p$-power map on the integrable derivations is by studying it on $\mathrm{Aut_1}(A[[t]])$ and then using the homomorphism $\phi:\mathrm{Aut_1}(A[[t]]) \to (\mathrm{End_k}(A)[[t]])^{\times}$. 
\begin{proposition}
\label{propbin}
Let $\underline{D}$ be a higher derivation and let $l, n$ be positive integers. The term at $t^l$ in
$\Big(\sum_{i\geq 0}D_i t^i\Big)^n$ is equal to
\begin{equation}
\label{bin}
\sum_{c=1}^l \binom{n}{c} \sum_{\substack{ i_1, \dots i_c \geq 1 \\ i_1+\dots +i_c=l}} \prod_{j=1}^{c}D_{i_j}
\end{equation}
\end{proposition}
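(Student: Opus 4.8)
The plan is to exploit the fact that $D_0 = \mathrm{Id}$ in order to reduce the whole computation to the ordinary binomial theorem in the power series ring $\mathrm{End}_k(A)[[t]]$. First I would write
\[
\sum_{i\geq 0} D_i t^i = \mathrm{Id} + S, \qquad S := \sum_{i \geq 1} D_i t^i,
\]
so that $S$ is the part of strictly positive $t$-degree. Since $\mathrm{Id}$ is central in $\mathrm{End}_k(A)[[t]]$, it commutes with $S$, and hence the binomial theorem applies verbatim even though the ring is noncommutative:
\[
\Big(\sum_{i\geq 0} D_i t^i\Big)^n = (\mathrm{Id}+S)^n = \sum_{c=0}^{n} \binom{n}{c}\, S^c.
\]

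Next I would read off the coefficient of $t^l$ on the right-hand side term by term. Expanding $S^c = \big(\sum_{i \geq 1} D_i t^i\big)^c$ and collecting powers of $t$, a monomial $D_{i_1}\cdots D_{i_c}\, t^{i_1 + \cdots + i_c}$ contributes to $t^l$ precisely when $i_1 + \cdots + i_c = l$, so the coefficient of $t^l$ in $S^c$ equals $\sum_{\substack{i_1,\dots,i_c \geq 1 \\ i_1 + \dots + i_c = l}} \prod_{j=1}^{c} D_{i_j}$. Here the factors $D_{i_j}$ need not commute, but their left-to-right ordering is preserved throughout the expansion, so no reordering is needed and the product is exactly the one appearing in $(\ref{bin})$.

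It then remains to reconcile the ranges of summation. The term $c = 0$ gives $S^0 = \mathrm{Id}$, whose $t^l$-coefficient vanishes because $l \geq 1$, so it may be dropped. For $c > l$ the inner sum is empty, since the constraints $i_j \geq 1$ together with $i_1 + \cdots + i_c = l$ force $c \leq l$; and for $c > n$ the binomial coefficient $\binom{n}{c}$ vanishes by the usual convention. Therefore truncating the outer index to $1 \leq c \leq l$ changes nothing, which assembles the term-by-term contributions into exactly the expression $(\ref{bin})$.

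The argument is essentially bookkeeping; the only points requiring care are the justification of the binomial expansion in the noncommutative ring $\mathrm{End}_k(A)[[t]]$, which is legitimate solely because $\mathrm{Id}$ is central, and the reconciliation of the stated bound $c \leq l$ with the natural bound $c \leq \min(n,l)$ produced by the expansion. Alternatively one could prove the statement by a routine induction on $n$ using Proposition $\ref{cor1.2}$, but the binomial route avoids the combinatorial identities such an induction would generate.
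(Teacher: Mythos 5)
Your proof is correct, and it takes a somewhat different route from the paper's. The paper expands $\big(\sum_{i\geq 0}D_i t^i\big)^n$ directly as a sum over all $n$-tuples $(i_1,\dots,i_n)$ of nonnegative integers with $i_1+\dots+i_n=l$, and then does the counting by hand: for each composition $(i'_1,\dots,i'_c)$ of $l$ into $c$ positive parts there are exactly $\binom{n}{c}$ such $n$-tuples whose nonzero entries are those parts in order, the remaining factors $D_0=\mathrm{Id}$ being absorbed into the product. You instead split off the identity, writing the series as $\mathrm{Id}+S$ with $S$ of strictly positive $t$-order, and invoke the binomial theorem (legitimate because $\mathrm{Id}$ is central), so that $\binom{n}{c}$ appears automatically and only the routine expansion of $S^c$ remains. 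The two arguments encode the same combinatorial fact --- choosing which $c$ of the $n$ factors contribute a non-identity term --- but your version packages the tuple count into a standard algebraic identity, which is cleaner and less error-prone, while the paper's direct expansion makes more visible exactly where the hypothesis $D_0=\mathrm{Id}$ is used. Your reconciliation of the summation ranges (dropping $c=0$ since $l\geq 1$, the inner sum being empty for $c>l$, and $\binom{n}{c}=0$ for $c>n$) is also correct, and is in fact slightly more careful than the paper, which leaves these boundary conventions implicit.
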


\begin{proof}
The term at $t^l$ in  $\Big(\sum_{i\geq 0}D_i t^i\Big)^n$  is given by 
\begin{equation}
\label{bin1}
\sum_{ \substack{ i_1, \dots, i_n\geq 0 \\ i_1+ \dots +i_n =l}} \prod_{j=1}^{n} D_{i_j}.
\end{equation}
 Let $c$ be a positive integer. Then for each $c$-tuple $(i'_1, i'_2, \dots, i'_c)$ which has non-zero components and such that $\sum_{j=1}^c i'_j=l$, there are $ \binom {n} {c}$ different $n$-tuples $(i_1, i_2, \dots ,i_n)$ which have the $c$ non-zero components of the $c$-tuple $(i'_1, i'_2, \dots, i'_c)$ and rest equal to zero. Since $D_0=\mathrm{Id}$ then $ \prod_{j=1}^n D_{i_j} = \prod_{j=1}^c D_{i'_j}$. For a fixed $c$ the Equation $(\ref{bin1})$ is given by $\binom{n}{c} \sum_{\substack{ i_1, \dots i_c \geq 1 \\ i_1+\dots +i_c=l}} \prod_{j=1}^{c}D_{i_j}$. If we sum over all $c$ we have the result.
\end{proof}

\begin{corollary}
\label{P1} 
Let $A$ be a finite-dimensional $k$-algebra and let $\alpha \in \mathrm{Aut_r}(A[[t]])$  for some positive integer $r$. Then $\alpha^p \in \mathrm{Aut}_{rp}(A[[t]])$. The $p$-power map sends $\mathrm{HH}^1_r(A)$ to $\mathrm{HH}^1_{rp}(A)$, and $\mathrm{Out}_r(A[[t]])$ to $\mathrm{Out}_{rp}(A[[t]])$ and we have a commutative diagram

 \begin{center}
 \begin{minipage}{0.3 \textwidth}
      \begin{tikzpicture}[node distance=3.5cm, auto]
  \node (HB1) {$\mathrm{Out}_r(A[[t]])$};
  \node (HB2) [below of=HB1] [node distance=2 cm] {$\mathrm{HH}_{r}^1(A)$};
  \node (HA1) [right of=HB1] {$\mathrm{Out}_{rp}(A[[t]])$};
  \node (HA2) [right of=HB2] {$\mathrm{HH}_{rp}^1(A)$};
  \draw[->] (HB1) to node {$(\ )^p$} (HA1);
  \draw[->] (HA1) to node {$$} (HA2);
 \draw[->] (HB1) to node   {$$} (HB2);
\draw[->] (HB2) to node  {$[p]$} (HA2);
\end{tikzpicture}
\end{minipage}
\end{center}

where the vertical maps are from Proposition $\ref{1.4.1}\ (b)$, $(\ )^p$ is the $p$-fold composition and $[p]$ is the $p$-power map.
\end{corollary}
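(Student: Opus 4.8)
The plan is to reduce everything to a single computation in $(\mathrm{End}_k(A)[[t]])^{\times}$ via the homomorphism $\phi$ and then read off both the degree shift and the induced derivation from the coefficients. Write $\alpha$ as a higher derivation $\underline{D}=(D_i)_{i\geq 0}$, so that $\phi(\alpha)=\sum_{i\geq 0}D_it^i$; since $\alpha\in\mathrm{Aut}_r(A[[t]])$ we have $D_0=\mathrm{Id}$ and $D_i=0$ for $1\leq i\leq r-1$, with $D_r=\bar\mu$ on $A$. Because $\phi$ is a group homomorphism, $\phi(\alpha^p)=\phi(\alpha)^p=\big(\sum_{i\geq 0}D_it^i\big)^p$, so the coefficient of $t^l$ in $\phi(\alpha^p)$ is exactly the expression from Proposition $\ref{propbin}$ with $n=p$.

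First I would prove $\alpha^p\in\mathrm{Aut}_{rp}(A[[t]])$ by showing that the coefficient of $t^l$ in $\phi(\alpha^p)$ vanishes for $1\leq l\leq rp-1$. In the sum $\sum_{c=1}^l\binom{p}{c}\sum_{i_1+\cdots+i_c=l}\prod_{j=1}^cD_{i_j}$, a $c$-tuple contributes only when every $i_j\geq r$ (otherwise some $D_{i_j}=0$), which forces $l=i_1+\cdots+i_c\geq cr$, i.e.\ $c\leq l/r$. For $l\leq rp-1$ this gives $c\leq p-1$, and since $p$ is prime, $\binom{p}{c}\equiv 0\pmod p$ for $1\leq c\leq p-1$; hence every surviving summand vanishes in characteristic $p$. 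This is the crux of the argument. Thus $\phi(\alpha^p)\equiv\mathrm{Id}\pmod{t^{rp}}$, that is $\alpha^p\in\mathrm{Aut}_{rp}(A[[t]])$.

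Next I would identify the derivation induced by $\alpha^p$. By the same analysis the coefficient of $t^{rp}$ is $\sum_{c=1}^{p}\binom{p}{c}(\cdots)$; the terms with $1\leq c\leq p-1$ again die modulo $p$, and the only $c=p$ contribution requires all $i_j\geq r$ with $\sum i_j=rp$, forcing every $i_j=r$, so it equals $\binom{p}{p}D_r^p=D_r^p$. Reducing modulo $t$, the derivation induced by $\alpha^p$ on $A$ is $D_r^p=\bar\mu^p$, which is precisely $[p](\bar\mu)$. This simultaneously shows that $[p]$ sends $\mathrm{HH}^1_r(A)$ into $\mathrm{HH}^1_{rp}(A)$ (an $r$-integrable derivation $\bar\mu$, integrated by $\alpha$, has $\bar\mu^p$ integrated by $\alpha^p\in\mathrm{Aut}_{rp}$), and, using Lemma $\ref{out}$ to lift any class of $\mathrm{Out}_r(A[[t]])$ to a representative in $\mathrm{Aut}_r(A[[t]])$, that the $p$-fold composition sends $\mathrm{Out}_r(A[[t]])$ into $\mathrm{Out}_{rp}(A[[t]])$.

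Finally, the commutativity of the diagram is a matter of tracing an element $\bar\alpha\in\mathrm{Out}_r(A[[t]])$ both ways. Going right then down sends $\bar\alpha$ to $\overline{\alpha^p}\in\mathrm{Out}_{rp}(A[[t]])$ and then, via the surjection of Proposition $\ref{1.4.1}\,(b)$, to the class of the derivation induced by $\alpha^p$, namely $\bar\mu^p$; going down then right sends $\bar\alpha$ to the class of $\bar\mu$ and then by $[p]$ to $\bar\mu^p$. The two agree, so the square commutes. I expect the only genuinely delicate point to be the bookkeeping in the second step---ensuring that the constraint $i_j\geq r$ together with $\binom{p}{c}\equiv 0\pmod p$ really annihilates all coefficients below $t^{rp}$---while the remaining parts are formal consequences of the homomorphism property of $\phi$ and the exact sequence of Proposition $\ref{1.4.1}$.
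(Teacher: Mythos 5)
Your proposal is correct and follows essentially the same route as the paper: both compute the coefficients of $\phi(\alpha)^p$ via Proposition \ref{propbin}, kill all terms with $1\leq c\leq p-1$ using $\binom{p}{c}\equiv 0 \pmod p$, and use the constraint $i_j\geq r$ to conclude that the first nonzero coefficient sits in degree $rp$ and equals $D_r^p$. Your bookkeeping (treating all $l\leq rp-1$ uniformly via $c\leq l/r<p$) is in fact slightly tidier than the paper's split into $l\leq p-1$ and $l\geq p$, and your explicit diagram chase at the end makes precise what the paper compresses into ``consequently the diagram commutes.''
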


\begin{proof}
Let $\alpha \in \mathrm{Aut_r}(A[[t]])$ and let $D_r$ the derivation in $\mathrm{Der}_r(A)$. Let $\underline{D'}$ be the higher derivation associated to $\alpha^p$. 
 Using  Proposition $\ref{propbin}$, in degree $l \leq p-1$ we have:
 \begin{equation}
 \sum_{c=1}^l \binom{p}{c} \sum_{\substack{ i_1, \dots i_c \geq 1 \\ i_1+\dots +i_c=l}} \prod_{j=1}^{c}D_{i_j} t^l=0
\end{equation}
 since the binomial coefficient give us multiples of $p$. For $l \geq p$
 \begin{equation}
 \sum_{c=1}^l \binom{p}{c} \sum_{\substack{ i_1, \dots i_c \geq 1 \\ i_1+\dots +i_c=l}} \prod_{j=1}^{c}D_{i_j}t^l = \sum_{\substack{ i_1, \dots i_p \geq 1 \\ i_1+\dots +i_p=l}} \prod_{j=1}^{c}D_{i_j} t^l
\end{equation}
Now we know that each $D_i$ is zero for $i=1,\dots ,r-1$ so in order to have an element different from zero we should impose that each $i_j$ be at least $r$. Therefore the sum 
$i_1+\dots+ i_p=rp$ that is $l=rp$ hence the first non-zero coefficient  is $D_r^p$. Consequently the diagram commutes. 
\end{proof}

\section{A cohomological interpretation of $r$-integrable derivations}
Integrable derivation can also being interpreted  using a cohomological point of view.
Starting from the short exact sequence of $A[[t]]$-$A[[t]]$-bimodules:
\begin{center}
 \begin{tikzpicture}[node distance=3 cm,auto]
  \node (P2) {$0$};
  \node (P1) [right of=P2][node distance=2 cm] {$A[[t]]$};
  \node (P0) [right of=P1] {$A[[t]]$};
  \node (A) [right of=P0] {$ A[[t]]/t^rA[[t]]$};
    \node (A0) [right of=A][node distance=2 cm] {$0$};
 \draw[->] (P2) to node  {$$} (P1);
  \draw[->] (P1) to node {$t^r$} (P0);
 \draw[->] (P0) to node   {$$} (A);
\draw[->] (A) to node  {$$} (A0);
\end{tikzpicture}
\end{center}

after dividing by $tA[[t]]$ and  twisting on the right by the automorphism $\alpha \in \mathrm{Aut}_r(A[[t]])$  we obtain the short exact sequence:

\begin{center}
 \begin{tikzpicture}[node distance=3 cm,auto]
  \node (P2) {$0$};
  \node (P1) [right of=P2] [node distance=2 cm] {$A[[t]]/tA[[t]]$};
  \node (P0) [right of=P1] {$(A[[t]]/t^{r+1}A[[t]])_{\alpha}$};
  \node (A) [right of=P0] {$ A[[t]]/t^rA[[t]]$};
    \node (A0) [right of=A] [node distance=2 cm] {$0$};
 \draw[->] (P2) to node  {$$} (P1);
  \draw[->] (P1) to node {$t^r$} (P0);
 \draw[->] (P0) to node   {$$} (A);
\draw[->] (A) to node  {$$} (A0);
\end{tikzpicture}
\end{center}

since $\alpha$ induces the identity on $A[[t]]/t^rA[[t]]$ hence also on $A[[t]]/tA[[t]]$.

The following proposition is an adaptation of $\cite[4.1]{Markus}$ to the situation under consideration.
\begin{proposition}
\label{con}
 Let $A$ be a finite-dimensional algebra over $k$. Set $\hat{A}=A[[t]]$ and set $\hat{A}^e=\hat{A}\otimes_{k[[t]]}\hat{A}^{op}$.  Let $\alpha \in \mathrm{Aut_r}(\hat{A})$. Let $r$ a positive integer and let $\mu : A \to A$ be the unique linear map satisfying  $\alpha(a)=a+t^r\mu(a)$.  Let $P$ be a projective resolution of $\hat{A}$ as $\hat{A}^e$-module. Applying the functor $\mathrm{Hom}_{\hat A}(P,-)$ to the  exact sequence of $\hat{A}^e$-modules

\begin{center}
 \begin{tikzpicture}[node distance=3 cm,auto]
  \node (P2) {$0$};
  \node (P1) [right of=P2][node distance=2 cm] {$\hat{A}/t\hat{A}$};
  \node (P0) [right of=P1] {$(\hat{A}/t^{r+1}\hat{A})_{\alpha}$};
  \node (A) [right of=P0] {$ \hat{A}/t^r\hat{A}$};
    \node (A0) [right of=A] [node distance= 2 cm] {$0$};
 \draw[->] (P2) to node  {$$} (P1);
  \draw[->] (P1) to node {$t^r$} (P0);
 \draw[->] (P0) to node   {$$} (A);
\draw[->] (A) to node  {$$} (A0);
\end{tikzpicture}
\end{center}
yields a short exact sequence of cochain complexes

\begin{center}
 \begin{tikzpicture}[node distance=3.8 cm,auto]
  \node (P2) {$0$};
  \node (P1) [right of=P2] [node distance=2 cm] {$\mathrm{Hom}_{\hat{A}^{e}}(P,A)$};
  \node (P0) [right of=P1]  {$\mathrm{Hom}_{\hat{A}^{e}}(P,(\hat{A}/t^{r+1}\hat{A})_{\alpha})$};
  \node (A) [right of=P0]{$\mathrm{Hom}_{\hat{A}^{e}}(P, \hat{A}/t^r\hat{A})$};
    \node (A0) [right of=A] [node distance=2 cm]  {$0$};
 \draw[->] (P2) to node  {$$} (P1);
  \draw[->] (P1) to node {$t^r$} (P0);
 \draw[->] (P0) to node   {$$} (A);
\draw[->] (A) to node  {$$} (A0);
\end{tikzpicture}
\end{center}
The first non trivial connecting homomorphism can be identified with a map
\begin{equation}
\mathrm{End}_{\hat{A}^e}(\hat{A}/t^r\hat{A})\to \mathrm{HH^1}(A)
\end{equation}
and this map sends $\mathrm{Id}_{\hat{A}/t^r\hat{A}}$ to the class of the derivation induced by $\mu$ on $A$.
\end{proposition}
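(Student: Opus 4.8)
The plan is to compute the first connecting homomorphism $d^0\colon H^0(Z^\bullet)\to H^1(X^\bullet)$ directly, where I abbreviate $X^\bullet=\mathrm{Hom}_{\hat A^e}(P,A)$, $Y^\bullet=\mathrm{Hom}_{\hat A^e}(P,(\hat A/t^{r+1}\hat A)_\alpha)$ and $Z^\bullet=\mathrm{Hom}_{\hat A^e}(P,\hat A/t^r\hat A)$, using the explicit description of the connecting map recalled in Section~2. First I would pin down its source and target. As $P\to\hat A$ is a projective resolution, $H^0(Z^\bullet)=\mathrm{Hom}_{\hat A^e}(\hat A,\hat A/t^r\hat A)=Z(\hat A/t^r\hat A)$, and this equals $\mathrm{End}_{\hat A^e}(\hat A/t^r\hat A)$ because the $\hat A^e$-action on $\hat A/t^r\hat A$ factors through $(\hat A/t^r\hat A)^e$; under this identification $\mathrm{Id}_{\hat A/t^r\hat A}$ is the class of $1$. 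For the target, $H^1(X^\bullet)=\mathrm{HH}^1(\hat A;\hat A/t\hat A)$, which the isomorphism $(\ref{1eq})$ with $r=1$ identifies with $\mathrm{HH}^1(\hat A/t\hat A)=\mathrm{HH}^1(A)$. Thus $d^0$ is precisely a map $\mathrm{End}_{\hat A^e}(\hat A/t^r\hat A)\to\mathrm{HH}^1(A)$, as asserted.

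Since the connecting homomorphism does not depend on the chosen resolution, I would compute with the bar resolution of $\hat A$ over $k[[t]]$, whose low-degree terms give cochain groups $\mathrm{Hom}_{\hat A^e}(P_0,M)\cong M$ and $\mathrm{Hom}_{\hat A^e}(P_1,M)\cong\mathrm{Hom}_{k[[t]]}(\hat A,M)$, with degree-$0$ differential $m\mapsto(a\mapsto a\cdot m-m\cdot a)$ evaluated in the appropriate bimodule. The only place the twist enters is the middle term: for $m\in(\hat A/t^{r+1}\hat A)_\alpha$ the right action is $m\cdot a=m\alpha(a)$, so the differential there is $d^0_Y(m)(a)=am-m\alpha(a)$.

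Next I would run the three steps of the recipe on the degree-$0$ cocycle $z=\bar 1\in Z^0=\hat A/t^r\hat A$ representing $\mathrm{Id}$. A preimage under $\sigma$ is $y=1+t^{r+1}\hat A$. Applying the differential of $Y^\bullet$ gives $d^0_Y(y)(a)=a-\alpha(a)=-t^r\mu(a)$ in $\hat A/t^{r+1}\hat A$; reducing modulo $t^{r+1}$ this is $-t^r\bar\mu(\bar a)$, which lies in the image of $\tau$, the map induced by multiplication by $t^r$ on $A=\hat A/t\hat A$. Pulling back along $\tau$ (i.e.\ dividing by $t^r$) yields the $1$-cochain $x\in\mathrm{Hom}_{k[[t]]}(\hat A,A)$ with $x(a)=-\bar\mu(\bar a)$, and by definition $d^0(\mathrm{Id})$ is the class of $x$ in $\mathrm{HH}^1(A)$.

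Finally I would identify this class. Because $x$ is $k[[t]]$-linear and $\hat A\cong k[[t]]\otimes_k A$ by Lemma~$\ref{rmk1}$, the cochain-level form of $(\ref{1eq})$ sends $x$ to its restriction $-\bar\mu\colon A\to A$, a Hochschild $1$-cocycle representing the class of the derivation $\bar\mu$ induced by $\mu$ (the overall sign being an artefact of the chosen sign convention for the connecting map, and immaterial to the statement). The step needing the most care is the simultaneous bookkeeping of the twist by $\alpha$, the division by $t^r$ in inverting $\tau$, and the comparison $(\ref{1eq})$ at the level of cochains; the remaining verifications are routine unwindings of the definitions.
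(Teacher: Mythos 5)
Your proposal is correct and follows essentially the same route as the paper: both compute the connecting homomorphism on the bar resolution of $\hat{A}$ over $k[[t]]$, lift the identity cocycle into the twisted module (your lift $y=1+t^{r+1}\hat{A}$ is exactly the paper's lift $\bar{\zeta}\colon a\otimes b\mapsto a\alpha(b)+t^{r+1}\hat{A}$ under the identification $\mathrm{Hom}_{\hat{A}^e}(\hat{A}\otimes_{k[[t]]}\hat{A},M)\cong M$, $f\mapsto f(1\otimes 1)$), apply the differential to obtain $\pm t^r\mu$, divide by $t^r$, and conclude via $(\ref{1eq})$. The sign you flag is indeed purely conventional: the paper takes the differential of the Hom complex to be composition with $-\delta_1$, which yields $+\bar{\mu}$ where your standard convention yields $-\bar{\mu}$, and a global sign change of the connecting homomorphism has no effect on the way the proposition is used later.
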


\begin{proof}
We take as a projective resolution the bar resolution $P$ of $\hat{A}$ where the tensor products are over $k[[t]]$:

\begin{center}
 \begin{tikzpicture}[node distance=2.5 cm,auto]
  \node (P2) {$\dots$};
  \node (P1) [right of=P2] [node distance=2 cm] {$\hat{A}^{\otimes n+2}$};
  \node (P0) [right of=P1]  {$\hat{A}^{\otimes n+1}$};
  \node (A) [right of=P0]{$\dots$};
 \draw[->] (P2) to node  {$$} (P1);
  \draw[->] (P1) to node {$\delta_n$} (P0);
 \draw[->] (P0) to node   {$$} (A);
\end{tikzpicture}
\end{center}

which is given by $\delta_n(a_0\otimes \dots \otimes a_{n+1})=\sum_{i=0}^{n}(-1)^i a_0\otimes \dots \otimes a_ia_{i+1}\otimes \dots \otimes a_{n+1}$. The last non-zero differential is the map $\delta_1:\hat{A}^{\otimes 3} \to \hat{A}^{\otimes 2} $  which sends
$a\otimes b\otimes c$ to $ab\otimes c -a\otimes bc$ 
for $a,b,c \in \hat{A}$. 
We have the following identifications:
\begin{equation}
\begin{split}
\mathrm{H}^0(\mathrm{Hom}_{\hat{A}^e}(P,\hat{A}/t^r\hat{A}))&=\mathrm{HH}^0(\hat{A},\hat{A}/ t^r\hat{A}) \\
&\cong\mathrm{HH}^0(\hat{A}/t^r\hat{A})= \mathrm{End}_{\hat{A}^e}(\hat{A}/t^r\hat{A})
\end{split}
\end{equation}
The identity map in $\mathrm{End}_{\hat{A}^e}(\hat{A}/t^r\hat{A})$ corresponds to the homomorphism 
\begin{equation} 
\begin{split}
\zeta:\hat{A}\otimes_{k[[t]]} \hat{A} &\to \hat{A}/t^r\hat{A}\\
a\otimes b &\mapsto \zeta(a\otimes b)=ab+t^r\hat{A}
\end{split}
\end{equation}
 for all $a,b \in A [[t]]$. This lifts to an $\hat{A}^e$-homomorphism 
 \begin{equation}
 \begin{split}
 \bar{\zeta}: \hat{A}\otimes_{k[[t]]} \hat{A} &\to (\hat{A}/t^{r+1}\hat{A})_{\alpha}\\
 a\otimes b &\mapsto \bar{\zeta}(a \otimes b)= a\alpha(b)+t^{r+1}\hat{A}
 \end{split}
 \end{equation}
  for $a, b \in \hat{A}$ since $\alpha$ induces the identity on $\hat{A}/t^r\hat{A}$.\\
Since $\bar{\zeta} \in \mathrm{Hom}_{\hat{A}}(\hat{A}\otimes \hat{A}, (\hat{A}/t^{r+1}\hat{A})_{\alpha})$  we need to apply the first non-zero differential
\begin{equation}
 \epsilon: \mathrm{Hom}_{\hat{A}^e}(\hat{A}^{\otimes 2}, (\hat{A}/t^{r+1}\hat{A})_{\alpha}) \to \mathrm{Hom}_{\hat{A}^e}(\hat{A}^{\otimes 3}, (\hat{A}/t^{r+1}\hat{A})_{\alpha})
 \end{equation} 
  which is given by composing with $-\delta_1$.
Hence in  $\hat{A}/t^{r+1}\hat{A}$ we have:
\begin{equation}
\begin{split}
(-\bar{\zeta}\circ \delta_1)(a\otimes b\otimes c)&= -\bar{\zeta}(ab \otimes c +a \otimes bc)= -ab\alpha(c)+a\alpha(bc)\\
&=a(\alpha(b)-b)\alpha(c)=t^ra\mu(b)\alpha(c).
\end{split}
\end{equation}
for all $a,b,c \in \hat{A}$. We observe that $t^ra\mu(b)\alpha(c)+t^{r+1}\hat{A} \in \hat{A}/t^{r+1}\hat{A}$ is the image, under $t^r:\hat{A}/t\hat{A} \to (\hat{A}/t^{r+1}\hat{A})_{\alpha}$, of the map $ \psi: \hat{A}^{\otimes 3} \to \hat{A}/t\hat{A}$, that is we have the following commutative diagram:
 \begin{center}
 \begin{minipage}{0.3 \textwidth}
      \begin{tikzpicture}[node distance=2.5cm, auto]
  \node (HB1) {$\hat{A/t\hat{A}}$};
  \node (HA1) [right of=HB1] {$(\hat{A/t^{r+1}\hat{A}})_{\alpha}$};
  \node (HA2) [above of=HA1] {$\hat{A}^{\otimes 3}$};
  \draw[->] (HB1) to node {$t^r$} (HA1);
  \draw[->] (HA2) to node {$-\zeta\circ \delta$} (HA1);
 \draw[->] (HA2) to node [swap]  {$\psi$} (HB1);
\end{tikzpicture}
\end{minipage}
\end{center}
  where $\psi$  sends $a\otimes b\otimes c$ to $a\mu(b)\alpha(c)+t\hat{A}$ which is equal to $a\mu(b)c+t\hat{A}$ since $\alpha(c)-c \in t^r\hat{A}\subseteq t\hat{A}$. Consequently $\psi$ induces a map $\bar{\psi}:\hat{A}^{\otimes 3} \to A$ which sends $\bar{a} \otimes \bar{b} \otimes \bar{c}$ to $\bar{a}\bar{\mu}(\bar{b})\bar{c}$ that can be restricted to the map $\bar{\psi}: A\to A$ that sends $\bar{b}$ to $\mu(\bar{b})$. Using $(\ref{1eq})$  the result follows.


\end{proof}

\section{Proof of Theorem $\ref{mainthm}$}
The proof of Theorem $\ref{mainthm}$ requires the following result, which is a variation of $\cite[5.1]{Markus}$:
 \begin{theorem}
 \label{thm1}
Let $A, B$ be finite-dimensional selfinjective $k$-algebras with separable semisimple quotients.  Let $r$ be a positive integer and let $M$, $N$ be an $A$-$B$-bimodule, $B$-$A$ bimodule, respectively, inducing a stable equivalence of Morita type between $A$ and $B$. Then for any $\alpha \in \mathrm{Aut}_r(A[[t]])$ there is $\beta \in \mathrm{Aut}_r(B[[t]])$ such that
 $_{\alpha^{-1}}M[[t]] \cong M[[t]]_{\beta}$ as $A[[t]]$-$B[[t]]$-bimodules. This correspondence induce a group isomorphism
  $\mathrm{Out}_r(A[[t]]) \cong \mathrm{Out}_r(B[[t]])$ making the following diagram commutative:

 \begin{center}
 \begin{minipage}{0.3 \textwidth}
      \begin{tikzpicture}[node distance=3.5cm, auto]
  \node (HB1) {$\mathrm{Out}_r(A[[t]])$};
  \node (HB2) [below of=HB1] [node distance=2cm] {$HH^1_r(A)$};
  \node (HA1) [right of=HB1] {$\mathrm{Out}_r(B[[t]])$};
  \node (HA2) [right of=HB2] {$HH^1_r(B)$};
  \draw[->] (HB1) to node {$\cong$} (HA1);
  \draw[->] (HA1) to node {$$} (HA2);
 \draw[->] (HB1) to node   {$$} (HB2);
\draw[->] (HB2) to node  {$\cong$} (HA2);
\end{tikzpicture}
\end{minipage}
\end{center}

where the vertical maps are from Proposition $\ref{1.4.1}$ and the lower horizontal isomorphism is induced by the functor $N\otimes_A - \otimes_A M$
 \end{theorem}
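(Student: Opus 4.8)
The plan is to move everything to the power-series level, where $\hat A = A[[t]]$ and $\hat B = B[[t]]$, and to read off $\beta$ by transporting the twisted bimodule ${}_{\alpha^{-1}}M[[t]]$ across the stable equivalence. First I would record the two structural consequences of the hypotheses that make the stable category of bimodules usable. Because $A$ and $B$ have separable semisimple quotients, the algebra $A\otimes_k B^{op}$ is again selfinjective, and the same holds after base change for $(A\otimes_k B^{op})[[t]]\cong \hat A\otimes_{k[[t]]}\hat B^{op}$; consequently two finitely generated $\hat A$-$\hat B$-bimodules without nonzero projective direct summands that become isomorphic in the stable category are already isomorphic, and Krull--Schmidt lets me speak of the non-projective part of such a bimodule. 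I would also note that $\hat M=M[[t]]$ and $\hat N=N[[t]]$ induce a stable equivalence of Morita type between $\hat A$ and $\hat B$: base change along the flat extension $k[[t]]$ preserves one-sided projectivity and the isomorphisms $\hat M\otimes_{\hat B}\hat N\cong\hat A\oplus\hat X$ and $\hat N\otimes_{\hat A}\hat M\cong\hat B\oplus\hat Y$ with $\hat X,\hat Y$ projective. After discarding projective bimodule summands I may assume $M,N$, hence $\hat M,\hat N$, have none.

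Next I would construct $\beta$. Since $\alpha\in\mathrm{Aut}_r(\hat A)$, the bimodule ${}_{\alpha^{-1}}\hat A$ is invertible, so ${}_{\alpha^{-1}}\hat M\cong{}_{\alpha^{-1}}\hat A\otimes_{\hat A}\hat M$ again induces a stable equivalence of Morita type, and twisting by an automorphism is an autoequivalence of the bimodule category, so ${}_{\alpha^{-1}}\hat M$ still has no projective summands. I transport it to the $B$-side by forming $\hat N\otimes_{\hat A}{}_{\alpha^{-1}}\hat M$. As $\alpha$ induces the identity modulo $t^r$, this bimodule reduces modulo $t^r$ to $\hat N\otimes_{\hat A}\hat M\cong\hat B\oplus\hat Y$, so its non-projective part $W$ is an invertible $\hat B$-bimodule with $W\cong\hat B\pmod{t^r}$. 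The identification of invertible $\hat B$-bimodules that are trivial modulo $t^r$ with $\mathrm{Aut}_r(\hat B)$ — the base-changed analogue of the passage from automorphisms to bimodules underlying Lemma \ref{out} and Proposition \ref{1.4.1} — then yields a unique $\beta\in\mathrm{Aut}_r(\hat B)$ with $W\cong\hat B_\beta$. Tensoring the resulting stable isomorphism $\hat N\otimes_{\hat A}{}_{\alpha^{-1}}\hat M\simeq\hat B_\beta$ on the left with $\hat M$ and using $\hat M\otimes_{\hat B}\hat N\cong\hat A\oplus\hat X$ gives a stable isomorphism ${}_{\alpha^{-1}}\hat M\simeq\hat M_\beta$; since neither side has a projective bimodule summand, the cancellation noted above upgrades this to a genuine isomorphism ${}_{\alpha^{-1}}M[[t]]\cong M[[t]]_\beta$ of $\hat A$-$\hat B$-bimodules.

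It then remains to see that $[\alpha]\mapsto[\beta]$ is the asserted isomorphism and that the square commutes. Running the same construction with the roles of $(A,M)$ and $(B,N)$ exchanged produces a map $\mathrm{Out}_r(\hat B)\to\mathrm{Out}_r(\hat A)$ inverse to it, and composition of automorphisms corresponds to the tensor product of the associated invertible bimodules, so the assignment is a group isomorphism $\mathrm{Out}_r(\hat A)\cong\mathrm{Out}_r(\hat B)$ induced by $\hat N\otimes_{\hat A}-\otimes_{\hat A}\hat M$. For the lower square I would use the cohomological description of the vertical maps from Proposition \ref{con}: the class in $\mathrm{HH}^1$ attached to $\alpha$ (resp. $\beta$) is the image of the identity under the connecting homomorphism of the short exact sequence $0\to\hat A/t\hat A\to(\hat A/t^{r+1}\hat A)_\alpha\to\hat A/t^r\hat A\to 0$ (resp. its $\hat B$-analogue). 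The bimodule isomorphism ${}_{\alpha^{-1}}\hat M\cong\hat M_\beta$ converts the $\alpha$-sequence into the $\beta$-sequence after applying $\hat N\otimes_{\hat A}-\otimes_{\hat A}\hat M$, and functoriality of the connecting homomorphism then identifies the transfer of the class of $\bar\mu$ with the class of the derivation attached to $\beta$, which is exactly the commutativity of the square.

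The main obstacle is the middle paragraph: passing from stable to genuine bimodule isomorphisms, and guaranteeing that the transported automorphism $\beta$ is again trivial modulo $t^r$, i.e. lands in $\mathrm{Aut}_r$ rather than merely in $\mathrm{Aut}_1$. Both points rest on the separability of the semisimple quotients — through the selfinjectivity of $A\otimes_k B^{op}$, which legitimizes the stable category of bimodules and the no-projective-summand cancellation, and through the identification of invertible bimodules congruent to the regular bimodule modulo $t^r$ with $\mathrm{Aut}_r$. Making the latter identification precise, rather than only stable, is the crux, and is where I expect the argument of \cite[5.1]{Markus} to require the most careful adaptation.
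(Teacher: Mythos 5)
Your overall architecture matches the paper's: first the existence of $\beta$ and the group isomorphism $\mathrm{Out}_r(A[[t]])\cong\mathrm{Out}_r(B[[t]])$, then the commutativity of the square obtained by comparing connecting homomorphisms of the two short exact sequences via the bimodule isomorphism ${}_{\alpha^{-1}}M[[t]]\cong M[[t]]_{\beta}$ and Proposition \ref{con}; that second half of your sketch is essentially the paper's argument (where you invoke naturality of the connecting homomorphism, the paper invokes \cite[4.3]{Markus} for the equivalence of the two sequences). The difference is in the first half: the paper simply cites \cite[4.2]{Markus} for the existence of $\beta$ and the isomorphism of outer automorphism groups, whereas you attempt to reconstruct that lemma, and the reconstruction has a genuine gap.

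The gap is the assertion that the non-projective part $W$ of $\hat N\otimes_{\hat A}{}_{\alpha^{-1}}\hat M$ is an \emph{invertible} $\hat B$-bimodule. What your setup actually yields is only that $W$ induces a stable self-equivalence of Morita type of $\hat B$ (with stable inverse the non-projective part of $\hat N\otimes_{\hat A}{}_{\alpha}\hat M$), together with $W\cong\hat B\oplus(\text{projective})$ modulo $t^r$. None of the general facts you list --- Krull--Schmidt, cancellation of projective summands, selfinjectivity of $A\otimes_k B^{op}$ --- upgrades a stable self-equivalence of Morita type to a Morita self-equivalence; in general such an upgrade is simply false, and bridging exactly this gap is the entire content of the lemma the paper cites. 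The missing ingredient is Linckelmann's theorem that a stable equivalence of Morita type between selfinjective algebras with separable semisimple quotients which sends simple modules to simple modules is a Morita equivalence, in its relative $k[[t]]$-linear form: because $\alpha$ induces the identity modulo $t$ and every simple $\hat B$-module is annihilated by $t$, the functor $W\otimes_{\hat B}-$ fixes all simples up to projective summands, and only then does invertibility of $W$ follow; the subsequent identification $W\cong\hat B_{\beta}$ with $\beta\in\mathrm{Aut}_r(\hat B)$ is the comparatively routine part. You flag this step yourself as ``the crux'' requiring ``the most careful adaptation,'' but flagging it is not supplying it: as written, the key existence statement of the theorem is asserted rather than proved. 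A smaller elision of the same kind: since $\hat A=A[[t]]$ is not finite dimensional, the stable category in which all of this must take place is the relatively $k[[t]]$-stable category used in the paper, not the ordinary stable category, and your base-change claims (one-sided projectivity, Krull--Schmidt, cancellation) need to be stated and verified in that relative setting.
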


\begin{proof}
 By the Lemma $\cite[4.2]{Markus}$ we have that the upper horizontal map is a group isomorphism. 
Let $\alpha \in \mathrm{Aut}_r(A[[t]])$, $\beta\in \mathrm{Aut}_r(B[[t]])$ such that $_{\alpha^{-1}}M[[t]] \cong M[[t]]_{\beta}$ as $A[[t]]$-$B[[t]]$-bimodules. We also have that $\alpha$ is such that $\alpha(a)=a+t^r\mu(a)$ for all $a\in A[[t]]$ and $\beta$ such that $\beta(b)=b+t^r\nu(b)$ for all $b \in B[[t]]$ for some $k[[t]]$-linear endomorphisms $\mu, \nu$. We denote by $\bar{\mu}$ and $\bar{\nu}$ the classes in $\mathrm{HH}^1_r(A)$ and $\mathrm{HH}^1_r(B)$ respectively determined by the canonical group homomorphism $\mathrm{Out}_r(A[[t]])\to \mathrm{HH}^1(A)$ and $\mathrm{Out}_r(B[[t]])\to \mathrm{HH}^1(B)$. Set $\hat{M}=M[[t]]$. By the assumptions, tensoring by $M$ yields a stable equivalence of Morita type between $A$ and $B$. In particular we have:
\begin{equation}
\mathrm{HH}^1(A)\cong \mathrm{Ext}^1_{A\otimes_k B^{op}}(M,M) \cong \mathrm{HH}^1(B)
\end{equation}
induced by the functors $-\otimes_{A} M$ and $M \otimes_{B} -$.
In addition since $B[[t]]$ is isomorphic to  $\hat{N} \otimes_{A[[t]]} \hat{M}$ in the relatively $k[[t]]$-stable category of $B[[t]] \otimes_{k[[t]]} B[[t]]^{op}$-modules, it follows that the isomorphism 
\begin{equation}
\mathrm{HH}^1(A) \cong \mathrm{HH}^1(B)
\end{equation}
 given by the composition of the two previous isomorphisms is induced by the functor $N \otimes_{A}\ -\ \otimes_{A} M$. 
 The functors $M\otimes_B -$, $-\otimes_A M$ also induce algebra homomorphisms
\begin{equation}
\mathrm{End}_{A^e}(A) \to \mathrm{End}_{A\otimes B^{op}}(M)\leftarrow \mathrm{End}_{B^e}(B)
\end{equation}
where $A^e=A\otimes_k A^{op}$ and similarly for $B^e$. Tensoring the following two exact sequence
 \begin{center}
 \begin{tikzpicture}[node distance=3.5cm,auto]
  \node (P2) {$0$};
  \node (P1) [right of=P2][node distance=1.5cm] {$A$};
  \node (P0) [right of=P1] {$(A[[t]]/t^{r+1}A[[t]])_{\alpha}$};
  \node (A) [right of=P0] {$ A[[t]]/t^rA[[t]]$};
  \node (A0) [right of=A] [node distance=2cm] {$0$};
 \draw[->] (P2) to node  {$$} (P1);
  \draw[->] (P1) to node {$$} (P0);
 \draw[->] (P0) to node   {$$} (A);
\draw[->] (A) to node  {$$} (A0);
\end{tikzpicture}
\end{center}
 and 
 
  \begin{center}
 \begin{tikzpicture}[node distance=3.5 cm,auto]
  \node (P2) {$0$};
  \node (P1) [right of=P2] [node distance=1.5cm] {$B$};
  \node (P0) [right of=P1] {$(B[[t]]/t^{r+1}B[[t]])_{\alpha}$};
  \node (A) [right of=P0] {$ B[[t]]/t^rB[[t]]$};
    \node (A0) [right of=A][node distance=2cm] {$0$};
 \draw[->] (P2) to node  {$$} (P1);
  \draw[->] (P1) to node {$$} (P0);
 \draw[->] (P0) to node   {$$} (A);
\draw[->] (A) to node  {$$} (A0);
\end{tikzpicture}
\end{center}
 by $-\otimes_{A[[t]]} \hat{M}$ and $\hat{M} \otimes_{B[[t]]} -$ yields short exact sequences of the form
  \begin{center}
 \begin{tikzpicture}[node distance=2.5 cm,auto]
  \node (P2) {$0$};
 \node (P1) [right of=P2] [node distance=2cm] {$M$};
  \node (P0) [right of=P1] {$_{\alpha^{-1}}(M[[t]]/t^{r+1}M[[t]])$};
  \node (A) [right of=P0] {$ M$};
  \node (A0) [right of=A] [node distance=2cm] {$0$};
 \draw[->] (P2) to node  {$$} (P1);
 \draw[->] (P1) to node {$$} (P0);
 \draw[->] (P0) to node   {$$} (A);
\draw[->] (A) to node  {$$} (A0);
\end{tikzpicture}
\end{center}
 
   \begin{center}
    \begin{tikzpicture}[node distance=2.5 cm,auto]
  \node (P2) {$0$};
  \node (P1) [right of=P2] [node distance=2cm] {$M$};
  \node (P0) [right of=P1] {$(M[[t]]/t^{r+1}M[[t]])_{\beta}$};
  \node (A) [right of=P0] {$ M$};
    \node (A0) [right of=A] [node distance=2cm] {$0$};
 \draw[->] (P2) to node  {$$} (P1);
  \draw[->] (P1) to node {$$} (P0);
 \draw[->] (P0) to node   {$$} (A);
 \draw[->] (A) to node  {$$} (A0);
 \end{tikzpicture}
 \end{center}
 
 By the naturality properties of the connecting homomorphism and from the description of $\bar{\mu}$, $\bar{\nu}$ in Proposition $\ref{con}$
 the image of $\bar{\mu}\otimes \mathrm{Id}_M$ and $\mathrm{Id}_M\otimes \bar{\nu}$ in $\mathrm{Ext}^1_{A\otimes_k B^{op}}(M,M)$
 are equal to the images of $\mathrm{Id}_{\hat{M}}$ under the two connecting homomorphisms 
 \begin{equation}
 \mathrm{End}_{A\otimes_{k}B^{op}}(\hat{M}) \to \mathrm{Ext}^1_{A\otimes_k B^{op}}(M,M)
 \end{equation}
 obtained after applying the functor $\mathrm{Hom}_{A[[t]]\otimes B[[t]]^{op}}(\hat{M}, -)$ to the short exact sequences using the same identification used in Proposition $\ref{con}$.
 By the Lemma $\cite[4.3]{Markus}$
 the two exact sequences are equivalent, consequently the connecting homomorphism are equal. Hence the two images of $\mathrm{Id}_{M}$ coincide. This shows that the group isomorphism $\mathrm{HH}^1_r(B)\cong \mathrm{HH}^1_r(A)$ induced by $\mathrm{Out}_r(B[[t]])\cong \mathrm{Out}_r(A[[t]])$ is equal to the one determined by the functor $N\otimes_{A}- \otimes_{A} M$. Hence the result. 
\end{proof}

\begin{proof}[Proof of Theorem $\ref{mainthm} $]
We show first that the following diagram commutes:

 \begin{center}
 \begin{minipage}{0.3 \textwidth}
      \begin{tikzpicture}[node distance=3.5cm, auto]
  \node (HB1) {$\mathrm{Out}_r(A[[t]])$};
  \node (HB2) [below of=HB1] [node distance=2 cm] {$\mathrm{Out}_{rp}(A[[t]])$};
  \node (HA1) [right of=HB1] {$\mathrm{Out}_{r}(B[[t]])$};
  \node (HA2) [right of=HB2] {$\mathrm{Out}_{rp}(B[[t]])$};
  \draw[->] (HB1) to node {$\cong$} (HA1);
  \draw[->] (HA1) to node {$(\ )^p$} (HA2);
 \draw[->] (HB1) to node   {$(\ )^p$} (HB2);
\draw[->] (HB2) to node  {$\cong$} (HA2);
\end{tikzpicture}
\end{minipage}
\end{center}
where the horizontal maps are from Theorem $\ref{thm1}$ and the vertical maps are $p$-fold compositions. Let $\alpha \in \mathrm{Aut}_r(A[[t]])$ and $\beta \in \mathrm{Aut}_r(B[[t]])$  such that $_{\alpha^{-1}}M[[t]] \cong M[[t]]_\beta$. Let $\mu, \nu$ be the unique linear maps on $A[[t]]$ such that $\alpha(a)= a+ t^r\mu(a)$ and $\beta(b)= b+t^r\nu(b)$ respectively. By Corollary $\ref{P1}$ we have $\alpha^p \in \mathrm{Aut}_{rp}(A[[t]])$, $\beta^p \in \mathrm{Aut}_{rp}(A[[t]])$ and also that the maps $\bar{\mu}$, $\bar{\nu}$, induced by $\mu, \nu$ on $A$, is  sent under the $p$-power map to $\bar{\mu}^p$ and $\bar{\nu}^p$ respectively. Hence we have the commutativity of the diagram above since $_{\alpha^{-p}}M[[t]] \cong M[[t]]_{\beta^p}$. Using the commutative diagram above and Theorem $\ref{thm1}$ we have that the class of $\bar{\mu}^p$ is sent though the isomorphism defined in Theorem \ref{mainthm} to the class of $\bar{\nu}^p$. Hence we have the commutativity of the diagram of the Theorem $\ref{mainthm}$.

\end{proof}

\section{Example}

The purpose of the following example is to show that $p$-power maps do not commute in general with transfer maps in the Hochschild cohomology of symmetric algebras.\\
Let  $H=\{1,(123), (132)\} \cong C_3 \leq S_3$ and $M= kS_3$ considered as a $kS_3$-$kC_3$ bimodule. By $\langle - , - \rangle$ we mean the standard bilinear form for the group algebra $kH$. We choose $\{1, t=(12)\}$ as set of representatives of $S_3/H$. We note that $M$ is finitely generated and projective as a right $kC_3$-module, since $[G:H]=2$, so there exist $x_i\in \mathrm{Hom}_{kC_3}(kS_3,kC_3)$ with $1\leq i \leq 2$ such that for any $x\in M$, $x=\sum_i x_i\varphi_i(x)$. Explicitly:
\begin{equation}
\begin{split}
&\varphi_1(1)=1, \varphi_1((123))=(123), \\
&\varphi_1((132))=(132), \varphi_1(g)=0
\end{split}
\end{equation}
 for every other $g \in G$.
Similarly we define:
\begin{equation}
\varphi_t(12)=1, \varphi_t((13))=(132), \varphi_t((23))=(123), \varphi_t(g)=0
\end{equation}
 for every other $g \in G$.
 Since $C_3$ is commutative then $\mathrm{HH}^1(kC_3)= \mathrm{Der}_k(kC_3)$ which is generated by $\{f_0,f_1,f_2\}$ such that $f_0((123))=1$, $f_1((123))=(123)$ and $f_2((123))=(132)$. In this case the explicit formula of the transfer map by $\cite [2.5] {KLZ}$ is given by:
\begin{equation}            
\begin{split}
\mathrm{tr}^M(f) &=\sum_{h \in H}\Big\langle h^{-1},f(\varphi_1(a)) \Big\rangle h +\Big\langle h^{-1}t,f(\varphi_t(a)) \Big\rangle th+\\
&\Big\langle h^{-1},f(\varphi_1(at)) \Big\rangle ht+ \Big\langle h^{-1},f(\varphi_t(at)) \Big\rangle tht
\end{split}
\end{equation}
where $f \in Der_k(kC_3)$. 
In particular for $a=(123)$ we have:
\begin{equation}
\begin{split}
\mathrm{tr}^M(f_0)((123))&= \sum_{h\in H}\Big(\Big\langle h^{-1},f_0((123))\Big\rangle + \Big\langle h,f_0((132))\Big\rangle\Big) h \\
&= \sum_{h \in H} \Big(\Big\langle h^{-1},1\Big\rangle + \Big\langle h,-(123)\Big\rangle\Big) h = 1- (132)
\end{split}
\end{equation}
similarly we have:
\begin{equation}
\mathrm{tr}^M(f_0)(132)= 1- (123).
\end{equation}
We can  note now that $\mathrm{tr}^M(f_0^{[3]})=0$ since $f_0^{[3]}=0$, so $\mathrm{tr}^M(f_0^{[3]})(132)=0$.
On the other hand $\mathrm{tr}^M(f_0)^{[3]}((132))= \mathrm{tr}^M(f_0)\circ \mathrm{tr}^M(f_0)(1-(123))= \mathrm{tr}^M(f_0)(-1+(132))= 1-(123)$. 
Since the transfer maps send elements on $\mathrm{HH}^1(B)$ to elements $\mathrm{HH}^1(A)$
 it should exists a inner derivation in $S_3$ which sends $(132)$ to
 $1-(123)$ if we require the commutativity of the diagram.
But there is no element in $ a\in kS_3$ such that $[a,(132)]=1$. Hence in this case the $p$-power map does not commute
with the transfer map.\\
\begin{remark}
This shows that the $p$-power map cannot be expressed in terms of the $BV$-operator, as this is invariant under transfer maps, by $\cite[10.7]{KLZ}$.
\end{remark}


\end{document}